\newtheorem{de}{Definition}
\newtheorem{pr}[de]{Proposition}
\newtheorem{re}[de]{Remark}
\newtheorem{cor}[de]{Corollary}
\newtheorem{con}[de]{Conjecture}
\theoremstyle{definition}
\newcommand{\dif}{\mathrm{d}}
\newcommand{\norm}[1]{\Vert #1 \Vert}  
\newcommand{\abs}[1]{\vert #1 \vert}
\newcommand{\DD}{\mathcal{D}} 
\newcommand{\RR}{\mathbb{R}}   
\newcommand{\NN}{\mathbb{N}}
\newcommand{\Ss}{\mathbb{S}}
\DeclareMathOperator{\di}{div}
\DeclareMathOperator{\curl}{curl}
\DeclareMathOperator{\grad}{grad}
\newcommand{\R}{{\mathbb{R}}}
\newcommand{\Z}{{\mathbb{Z}}}
\newcommand{\beq}{\begin{equation}}
\newcommand{\eeq}{\end{equation}}
\newcommand{\bea}{\begin{eqnarray}}
\newcommand{\eea}{\end{eqnarray}}
\newcommand{\ben}{\begin{eqnarray*}}
\newcommand{\een}{\end{eqnarray*}}
\newcommand{\bem}{\begin{enumerate}}
\newcommand{\eem}{\end{enumerate}}
\newcommand{\ra}{\rightarrow}
\newcommand{\wt}{\widetilde}
\newcommand{\wh}{\widehat}
\newcommand{\less}{\backslash}
\def \d{\mathrm{d}}
\newcommand{\ip}[1]{\langle #1 \rangle}
\newcommand{\ignore}[1]{}
\newcommand{\vol}{{\rm vol}}
\newcommand{\vphi}{\varphi}
\newcommand{\nvec}{\mbox{\boldmath{$n$}}}
\renewcommand{\div}{{\rm div}\, }
\begin{document}

\title{\textbf{BPS Skyrme models and contact geometry}}

\author{R. Slobodeanu \thanks{Faculty of Physics, University of Bucharest, 405 Atomi\c stilor, POB MG-11,
Bucharest-M\u agurele, Romania. E-mail: \texttt{radualexandru.slobodeanu@g.unibuc.ro}}
	\and 
	J.M. Speight \thanks{School of Mathematics, University of Leeds,
	Leeds LS2 9JT, England. E-mail: \texttt{J.M.Speight@leeds.ac.uk}}
	}

\date{\today}

\footnotetext{\emph{Key words and phrases.} Beltrami field, energy minimizer, Sasakian manifold, Skyrme model.} 

\footnotetext{\emph{2010 Mathematics Subject Classification} 35A15, 35C08, 53C25, 53C43, 53D10.}

\maketitle

\begin{abstract}
\noindent 
A Skyrme type energy functional for maps $\vphi:M\ra N$ from an oriented Riemannian 3-manifold $M$ to a 
contact 3-manifold $N$ is defined, generalizing the BPS Skyrme energy of Ferreira and Zakrzewski. This energy has a topological lower bound, attained by solutions of a first order self-duality equation which we call (strong) Beltrami maps. In the case where $N$ is the 3-sphere, we show that the original Ferreira-Zakrzewski model (which has $N=\Ss^3$ with the standard contact structure) can have no BPS solutions on $M=\Ss^3$ with $|\deg(\vphi)|>1$ if the coupling constant has the lowest admissible value.
\end{abstract}

\section{Introduction}

The Skyrme model \cite{sky} is an approximate low energy effective theory for nuclear physics, where baryons are modelled as topological solitons of pion fields.
More precisely a baryon is represented by an energy minimising, topologically nontrivial map (called a \textit{skyrmion}) $\varphi: \mathbb{R}^3 \to \mathbb{S}^3$ with $\lim_{|x| \to \infty}\vphi(x)=\vphi_0$, some constant point in $\Ss^3$. The degree of such a map (a topological invariant) is identified with the \textit{baryon number}. The \textit{Skyrme energy functional} is 
\begin{align*}
E_{\texttt{Skyrme}}(\varphi)&=\frac{1}{2}\int \big\{c\abs{\dif \varphi}^2+
\frac{4}{c} \abs{\dif\varphi \wedge \dif \varphi}^2\big\} \dif^3 x 
\end{align*}
where $c>0$ is a coupling constant.
This energy has a topological lower bound due to Faddeev,
$$
{E}_{\texttt{Skyrme}}(\varphi) \geq 12\pi^2\abs{\mathrm{deg}(\varphi)},
$$
with equality if some first order equations, generically called Bogomol'nyi (or BPS) equations,  are satisfied. Unfortunately, the BPS equations for $E_{\texttt{Skyrme}}$ are extremely restrictive: they imply that $\vphi$ is a local homothety, so they have no solution on $\RR^3$, and this bound is never attained. To remedy this inconvenience many Skyrme-like models admitting topological bounds with less restrictive BPS equations have been proposed recently \cite{ada, adam, FZ, FS}. In this paper we will consider one of these \cite{FZ,FS}.

Exploiting the standard identification $\Ss^3\equiv SU(2)$, one may \cite{riv} rewrite the Skyrme energy in terms of the
pullback, by $\vphi$, of the standard orthonormal coframe, $\{\sigma_1,\sigma_2,\sigma_3\}$, of left-invariant one-forms on $SU(2)$,
$$
{E}_{\texttt{Skyrme}}(\varphi)=\sum_{i=1}^3E_{i}(\vphi),\qquad
E_i(\vphi):=\frac{1}{2}\int \big\{c\abs{\varphi^* \sigma_i}^2 + c^{-1}\abs{\dif \varphi^* \sigma_i}^2\big\} \dif^3 x.
$$
In this way, the energy is naturally decomposed into a sum of three energies, each of which
satisfies the bound $E_i(\vphi)\geq 4\pi^2|\deg(\vphi)|$,
with equality if and only if 
\beq\label{melwal}
\ast \dif \varphi^* \sigma_i = c \varphi^* \sigma_i.
\eeq
So $\vphi$ attains the Faddeev bound if and only if it attains all three of these bounds, which
is an impossibly restrictive condition (unless physical space $\R^3$ is replaced by a suitably scaled three-sphere and $\deg(\vphi)=\pm1$). 

In \cite{FZ, FS} the authors propose the study of the truncation of the Skyrme model formed by just one of these three energies, without loss of generality, $E_{\texttt{FZ}}=E_3$ say 
(with possibly non-constant coupling $c$).
Since the energy bound is attained by solutions of the single BPS equation
\eqref{melwal}, the hope is that this model will
admit BPS energy minimizers in every degree class. In fact, as Ferreira and Zakrzewski 
\cite{FZ} point out, \eqref{melwal} has no nontrivial global finite energy solutions on $\R^3$ at all.  So the energy bound for $E_3$ is unattainable on $\R^3$.  It may still be sharp, however, that is, the infimum of $E_3$ on each degree class may be $4\pi^2|\deg(\phi)|$. If so, one would expect the binding energies of skyrmions in the one-parameter family of models with
$E=\alpha(E_1+E_2)+ E_3$ to converge to $0$ as $\alpha\ra 0^+$, yielding (yet another) family of so-called {\em near BPS} Skyrme models. Numerical evidence in favour of this conjecture has recently been obtained \cite{patspe}. 
Furthermore, the existence question for $E_3$ on a more general Riemannian domain $(M^3,g)$ remains open.

In this paper the key idea is to interpret the one-form $\eta=\sigma_3$ as a {\em contact form} on the target space $\Ss^3$. This allows us to define a natural generalization of $E_{\texttt{FZ}}$ to the class of maps $\vphi:(M^3,g)\ra (N^3,\eta)$ from an
oriented Riemannian 3-manifold $M$ to a contact 3-manifold $N$. This energy still has a topological lower bound, attained by solutions of \eqref{melwal} (with $\sigma_i$ replaced by $\eta$) that motivates its study in its own right.

The rest of the paper is structured as follows. In section \ref{sec2} we introduce the generalized FZ model
and review the basic notions from contact geometry required in its formulation. We derive its energy bound, compute its Euler-Lagrange equations and establish some properties of its BPS solutions. Special attention is paid to the cases when the target manifold is a circle bundle over a surface and has a Sasakian metric, respectively.  In section \ref{sec3} we restrict to the case of spheres ($M=N=\Ss^3$) and show that for the coupling constant $c=2$ any solution is homotopic to the identity. Finally, in section \ref{sec4} we briefly consider the case of spatially dependent coupling $c$ (as advocated in \cite{FS}), establishing nonexistence of BPS solutions on $M=\R^3$ for couplings satisfying a natural constraint.

We conclude this introductory section by describing our conventions. All maps, unless otherwise stated, are assumed to be (at least) $C^2$. On a Riemannian manifold $(M,g)$ we denote by $\flat:T_xM\ra T^*_xM$ the  isomorphism $X^\flat(Y):=g(X,Y)$ and by $\sharp$ its inverse. The Hodge isomorphism from $p$-forms to $3-p$ forms (we consider only 3-manifolds) is denoted $\ast$, and 
$\delta=(-1)^p\ast\d\ast$ denotes the
coderivative (adjoint to the exterior derivative $\d$ which on 1-forms reads $\d \eta (X,Y)=X(\eta(Y))-Y(\eta(X))- \eta([X,Y])$ \footnote{Note that this differs from \cite{blair} where a factor of 1/2 multiplies the right hand side.}). Note that for any vector field $X$, 
$\div X=-\delta X^\flat$ and $\curl X=\big(\ast\d X^\flat\big)^\sharp$.
 The Laplacian $\Delta$ will always mean the
Hodge Laplacian $\Delta=\d\delta+\delta\d$ (so, for functions, $\Delta=-\div\circ\grad$). $S^n$ will denote the $n$-sphere as an oriented manifold (but with no assumed metric), while $\Ss^n(r)$ will denote $S^n$ with the usual round metric of radius $r$; unit radius (round) spheres will simply be denoted as $\Ss^n$.

\section{Ferreira-Zakrzewski model and contact geometry}\label{sec2}

In this section, we place the Ferreira-Zakrzewski BPS Skyrme model \cite{FZ, FS} in the context of contact geometry, compute its Euler-Lagrange equations and deduce some basic properties of its solutions.

Let $\varphi:M\ra N$ be a $C^1$ map from an oriented Riemannian 3-manifold $(M,g)$ to a closed oriented $3$-manifold $N$ equipped with a contact form $\eta$. Recall that this, by definition, is a one-form on $N$
with $\eta\wedge\d\eta$ nowhere zero. Every closed orientable $3$-manifold possesses contact structures \cite{mar}, so this places no further constraint on the choice of a target space. To any such map we associate the energy
\beq \label{energcontact}
E(\varphi)=\frac12\int_M*(c|\varphi^*\eta|^2+c^{-1}|\varphi^*\d\eta|^2)
\eeq
where $c>0$ is a coupling constant. If $M$ is euclidean $\R^3$, $c$ can be set to $1$ by a rescaling of coordinates, but otherwise it is a nontrivial parameter of the model. The Ferreira-Zakrzewski model is recovered if we take $N=S^3$ with its
usual contact form (equivalently, $N=SU(2)$ with $\eta$ any left-invariant one-form).

Since $\frac12\eta\wedge\d\eta$ is nonvanishing, we may interpret it as a volume form on $N$. Then the energy functional above admits a topological lower bound, as follows: 
\bea
E(\vphi)&=& \frac{c}{2}\int_M*|\varphi^*\eta\mp c^{-1}*\d\varphi^*\eta|^2\pm \int_M*\ip{\varphi^*\eta,*\d\varphi^*\eta}\\
&\geq&\pm \int_M\varphi^*(\eta\wedge\d\eta)=\pm 2{\rm Vol}(N)\deg(\varphi)
\eea
where $\deg(\varphi)\in\Z$ is the topological degree of $\varphi$ (so $\varphi^*[N]=\deg(\varphi)[M]\in H^3(M,\Z)$ if $M$ is compact, where $[M]$, $[N]$ denote the fundamental classes generating $H^3$). By switching the orientation on $M$ if necessary, we may, and henceforth will, assume that $\deg(\varphi)\geq 0$ and take the upper signs. So $E(\varphi)\geq 2{\rm Vol}(N)\deg(\varphi)$ with equality if and only if
\beq\label{bog}
*\d\varphi^*\eta=c\varphi^*\eta.
\eeq
Note that this, the {\em Bogomol'nyi} equation, is a first order PDE for $\varphi$, in contrast to the Euler-Lagrange equations for $E$ which are, as we will see, second order.

A vacuum solution is any map $\vphi$ for which $E(\vphi)=0$. Clearly $\vphi$ is a vacuum if and only if $\vphi^*\eta = 0$ everywhere, that is, if $\vphi$ is \textit{isotropic} (in this case $\deg(\varphi)=0$). 

Consider $\wh{\xi}\in\Gamma(TM)$, the vector field dual, with respect to $g$, to the one-form $\vphi^*\eta$
(the name $\wh\xi$ will be explained shortly). 
If $\vphi$ satisfies (\ref{bog}),
\beq\label{coclosed}
\di\wh\xi=-\delta\vphi^*\eta=*\d*\vphi^*\eta=c^{-1}*\d\d\vphi^*\eta=0
\eeq
so $\wh\xi$ is divergenceless. Furthermore, it is pointwise parallel to its own curl, $\big(*\d\vphi^*\eta\big)^\sharp$. Such a vector field is usually called a {\em Beltrami field} (actually here we have a \textit{curl eigenfield} or  \textit{strong Beltrami field} as the proportionality factor is constant). 

\begin{de}
A nonisotropic map $\vphi:(M^3,g)\ra (N^3,\eta)$ satisfying (\ref{bog}) will be called a {\em Beltrami map}, or \emph{strong Beltrami map} if $c$ is constant. To be more precise we will sometimes call such a map \emph{(strong) $c$ - Beltrami map}.
\end{de}
Note that
\beq\label{cliatsc}
\Delta\vphi^*\eta=(\delta\d+\d\delta)\vphi^*\eta=\delta\d\vphi^*\eta=*\d**c\vphi^*\eta=c^2\vphi^*\eta,
\eeq
so $\vphi^*\eta$ is an eigenform of $\Delta$ on $(M,g)$ with eigenvalue $c^2$. If $M$ is compact, the spectrum of $\Delta$ is discrete, so this places a strong constraint on the allowed values of the coupling constant $c$ if (\ref{bog}) is to have any nontrivial solutions.

\begin{re} 
If we fix a $1$-form $\alpha$ as an arbitrary $*\d$-eigenform on $M$, then finding a Beltrami map $\varphi$ reduces to the \emph{pullback equation}  \cite{dac} $\vphi^* \eta = \alpha$. 
\end{re}

To formulate the Euler-Lagrange equations efficiently we must recall some basic facts from contact geometry
(see \cite{blair} for a more detailed exposition).
The {\em contact distribution} $\DD\subset TN$ is $\ker\eta$, that is, the assignment to each $y\in N$ of the two
dimensional plane $\DD_y\subset T_yN$ on which $\eta_y:T_yN\ra\R$ vanishes. The {\em Reeb} vector field $\xi$ is the
unique vector field such that 
\beq\label{defxi}
\eta(\xi)=1, \qquad  \iota_\xi\d\eta=0.
\eeq
It is convenient to equip $N$ with a so-called {\em adapted} metric $h$ (sometimes called a  ``compatible" or ``associated" metric). Given any metric $h$, we may define 
a $(1,1)$ tensor $\phi$ on $N$ by demanding
\beq\label{phidef}
h(X,\phi Y)=\frac12\d\eta(X,Y)
\eeq
for all $X,Y\in\Gamma(TN)$. 
Note that $\phi$ is skew, and that, by \eqref{defxi}, its kernel is spanned by $\xi$. The metric $h$ is
said to be adapted to the contact structure $\eta$ if $h(\xi, \xi)=1$ and the endomorphism $\phi$ satisfies
$\d\eta(\phi X, \phi Y)=\d\eta(X, Y)$ for any $X,Y \in \Gamma(\DD)$ and 
\begin{equation}\label{phi2}
\phi^2 = -I + \xi \otimes \eta,
\end{equation}
meaning that $\phi$ acts like an almost complex structure on $\DD=\xi^\perp$. In this case, $\eta^\sharp = \xi$ and the volume form defined by $h$ coincides with $\frac12\eta\wedge\d\eta$. As $\abs{\eta}_h=1$ we can easily deduce that $\ast \d \eta = 2 \eta$.  Actually an alternative definition of adapted metric is the following \cite{ch}: $h$ is an adapted metric to $\eta$ iff $\abs{\eta}_h=1$ and $\ast \d \eta = 2 \eta$. We emphasize that given a contact form, there is an abundance of compatible metrics, obtained using the polar decomposition of non-singular skew-symmetric matrices \cite{blair}.

Given any vector field $X$ on $N$ we shall denote by $X^\DD$ its component in $\DD$, that is
$X^\DD=X-\eta(X)\xi=-\phi^2X$.

\medskip
To any section $v$ of $\varphi^{-1}TN$ (the vector bundle over $M$ whose fibre above $x\in M$ is $T_{\varphi(x)}N$) we may associate the unique vector field $\wh{v}$ on $M$ such that
\beq
g(\wh{v},Y)=h(v,\d\varphi(Y))
\eeq
for all $Y\in\Gamma(TM)$. Equivalently $\wh{v}=\big(\varphi^* v^\flat\big)^\sharp$. Note that the image of the Reeb field $\xi$ (or, rather $\xi\circ\varphi$) under this map, is
\beq
\wh\xi=(\varphi^*\eta)^\sharp,
\eeq
explaining our choice of notation for this field.

\begin{pr}[Euler-Lagrange equations]
A map $\varphi:(M,g) \to (N, h,\eta)$ is a critical point for the energy functional 
$E(\varphi)$ if and only if it satisfies the following Euler-Lagrange equations:
\begin{equation}\label{EL}
\big(\dif \varphi\left((c^2\varphi^* \eta+\delta \dif \varphi^* \eta)^\sharp \right)\big)^\DD=0, \quad \delta \varphi^* \eta =0.
\end{equation}
\end{pr}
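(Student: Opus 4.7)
The plan is the standard first variation argument for $E$, organised so that the two equations in \eqref{EL} come out from two complementary families of test sections. Set $\alpha:=\vphi^*\eta$ and let $\{\vphi_t\}$ be a smooth one-parameter family with $\vphi_0=\vphi$ and $v:=\partial_t\vphi_t\big|_{t=0}\in\Gamma(\vphi^{-1}TN)$. Choosing any local extension $V$ of $v$ to a vector field on $N$ near $\vphi(M)$ and applying Cartan's formula $\mathcal{L}_V\eta=\iota_V\d\eta+\d\iota_V\eta$, one verifies that
$$\frac{\d}{\d t}\bigg|_{t=0}\vphi_t^*\eta=\iota_v\d\eta+\d\big(\eta(v)\big),\qquad \frac{\d}{\d t}\bigg|_{t=0}\vphi_t^*\d\eta=\d(\iota_v\d\eta),$$
where $\iota_v\d\eta\in\Omega^1(M)$ is the 1-form $X\mapsto\d\eta_{\vphi(x)}\big(v(x),\d\vphi_x(X)\big)$ and $\eta(v)\in C^\infty(M)$ is $x\mapsto\eta_{\vphi(x)}(v(x))$. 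Both right-hand sides manifestly depend only on $v$ and not on the chosen extension, which is the key point making the computation legitimate.

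Substituting into \eqref{energcontact}, differentiating at $t=0$, and integrating by parts using $\ip{\alpha,\d f}=\ip{\delta\alpha,f}$ and $\ip{\d\alpha,\d\beta}=\ip{\delta\d\alpha,\beta}$, I would collect
$$\d E(v)=\int_M\ip{c\,\alpha+c^{-1}\delta\d\alpha,\;\iota_v\d\eta}\,*1\;+\;\int_M c\,\delta\alpha\cdot\eta(v)\,*1.$$
To extract the geometric content of the first integrand, I would rewrite $\ip{\gamma,\iota_v\d\eta}$ in an orthonormal frame $\{e_i\}$ of $M$: expanding $\gamma^\sharp=\gamma_i e_i$ and using \eqref{phidef} gives
$$\ip{\gamma,\iota_v\d\eta}=\d\eta\big(v,\d\vphi(\gamma^\sharp)\big)=2h\big(v,\phi\,\d\vphi(\gamma^\sharp)\big).$$
Applied with $\gamma=c^{-1}(c^2\alpha+\delta\d\alpha)$ this yields the target-intrinsic form
$$\d E(v)=\int_M 2c^{-1}\,h\!\left(v,\phi\,\d\vphi\big((c^2\vphi^*\eta+\delta\d\vphi^*\eta)^\sharp\big)\right)*1\;+\;\int_M c\,\delta\vphi^*\eta\cdot\eta(v)\,*1.$$

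It then remains to read off \eqref{EL} by testing against two complementary families. Choosing $v=f\,\xi\circ\vphi$ for arbitrary $f\in C^\infty(M)$ annihilates the first integral, because $\phi\,\d\vphi(\cdot)$ lies in $\DD=\xi^\perp$ (a direct consequence of \eqref{phi2} and $\phi\xi=0$, together with $\xi\perp\DD$ which follows from $\eta^\sharp=\xi$), so the fundamental lemma forces $\delta\vphi^*\eta=0$. For $v$ valued in $\vphi^{-1}\DD$ the second integral vanishes, and since $\phi$ restricts to an isomorphism of $\DD$, the vanishing of $h(v,\phi W)$ for every such $v$ is equivalent to $W^\DD=0$. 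Applied to $W=\d\vphi\big((c^2\vphi^*\eta+\delta\d\vphi^*\eta)^\sharp\big)$ this is precisely the first equation of \eqref{EL}. The only delicate point in the whole argument is the opening step—carefully justifying that $v$, which is only defined along $\vphi$, is enough to determine the variational formula (it is, because $\iota_v\d\eta$ and $\eta(v)$ are manifestly independent of the extension $V$); everything else is algebraic manipulation under the integral, with signs fixed by the paper's conventions $\delta=(-1)^p*\d*$ and by the fact that $h$ is adapted to $\eta$.
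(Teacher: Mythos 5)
Your argument is correct and arrives at exactly the paper's first-variation formula, but by a genuinely different computational route. The paper computes $\frac{\d}{\d t}\big|_{t=0}\tfrac12\int|\varphi_t^*\eta|^2$ with the pull-back connection $\nabla^\Phi$ (commuting $\nabla^\Phi_{\partial_t}\d\varphi_t(e_i)=\nabla^\varphi_{e_i}v$, then applying the divergence theorem to the vector field $\eta(v)\,\wh\xi$), and it simply imports the variation of the $|\varphi_t^*\d\eta|^2$ term from \cite[Prop.~2.2]{sve}. You instead derive both variation formulas at once from Cartan's formula, $\frac{\d}{\d t}\big|_{t=0}\varphi_t^*\eta=\iota_v\d\eta+\d(\eta(v))$ and $\frac{\d}{\d t}\big|_{t=0}\varphi_t^*\d\eta=\d(\iota_v\d\eta)$, then integrate by parts against $\delta$. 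This is self-contained (no external citation), connection-free, and makes transparent why only $\iota_v\d\eta$ and $\eta(v)$ enter. From that point on the two proofs coincide: both use \eqref{phidef} to convert $\ip{\gamma,\iota_v\d\eta}$ into $2h(v,\phi\,\d\varphi(\gamma^\sharp))$, and both extract the two equations from the $h$-orthogonal splitting $TN=\RR\xi\oplus\DD$ together with the facts $\phi(TN)\subset\DD$ and $\ker\phi=\RR\xi$; testing against $v=f\,\xi\circ\varphi$ and against $\DD$-valued $v$ is just an explicit way of saying what the paper says with ``$\phi X\perp\xi$''. One small caveat: a section $v$ of $\varphi^{-1}TN$ need not admit an extension to a vector field $V$ on $N$ when $\varphi$ is not injective, so the opening step is cleaner if you justify the variation formula by applying Cartan's formula to $\Phi^*\eta$ on $M\times(-\epsilon,\epsilon)$ with the vector field $\partial_t$ (which is essentially the paper's set-up with $\Phi(x,t)=\varphi_t(x)$), rather than via an extension; since, as you note, the right-hand sides depend only on $v$, this is a presentational repair, not a gap.
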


\begin{proof} 
Let $D$ be a compact domain of $M$ and let $\{\varphi_t\}$
be a variation of $\varphi$ supported in $D$ with variation vector field $v  \in \Gamma(\varphi^{-1}TN)$:
$$
v(x) = \displaystyle{\frac{\partial \varphi_t}{\partial t}(x) \Bigl\lvert _{t=0}} \in T_{\varphi(x)}N, \qquad \forall x \in M.
$$ 
Define $\Phi : M \times (-\epsilon, \epsilon) \to N$ by $\Phi(x, t) = \varphi_t (x)$. We denote by $\nabla^\Phi$ the pull-back connection on $\Phi^{-1}TN$ (see \cite{EL} for its definition and properties). When we write simply $\nabla$ we refer to the Levi-Civita connection on the respective manifold.

We now compute the derivatives of each of the two terms in the energy functional $E(\varphi_t)$ over $D$. Analogous to \cite[Prop. 2.2]{sve}, we obtain
\begin{equation}\label{2ndterm}
\frac{d}{d t}\Bigl\lvert _{t=0} \int_D \tfrac{1}{2} \abs{\varphi_{t}^{*} \dif \eta}^2 \vol_g =
\int_D \dif \eta\left(v, \dif \varphi\left((\delta \dif \varphi^* \eta)^{\sharp}\right)\right) \vol_g.
\end{equation}

Also, with respect to a local orthonormal frame $\{e_i\}_{i=1,2,3}$ on $M$, we have
\begin{equation*}\label{1stterm}
\begin{split}
\frac{d}{d t}\Bigl\lvert _{t=0} \int_D \tfrac{1}{2}\abs{\varphi_{t}^{*} \eta}^2 \vol_g 
&= \int_D \varphi^{*} \eta(e_i) \frac{d}{d t}\Bigl\lvert _{t=0} \langle \dif \varphi_t(e_i), \xi \circ \varphi_t \rangle \vol_g\\
&= \int_D \varphi^{*} \eta(e_i) \left(\langle \nabla^\Phi_{\partial_t} \dif \varphi_t(e_i), \xi \circ \varphi_t \rangle + \langle \dif \varphi_t(e_i), \nabla^\Phi_{\partial_t}\xi \circ \varphi_t \rangle \right)\Bigl\lvert _{t=0} \vol_g\\
&= \int_D \varphi^{*} \eta(e_i) \left(\langle \nabla^\varphi_{e_i} v, \xi \circ \varphi \rangle + \langle \dif \varphi(e_i), \nabla_{v} \xi \rangle \right) \vol_g\\
&= \int_D \varphi^{*} \eta(e_i) \left(e_i(\langle v, \xi \circ \varphi \rangle)- \langle  v,  \nabla^{\varphi}_{e_i} \xi \circ \varphi \rangle + \langle \dif \varphi(e_i), \nabla_{v}\xi \circ \varphi \rangle \right) \vol_g\\
&= \int_D g(\widehat{\xi}, e_i) \left(e_i(\langle v, \xi \circ \varphi \rangle)- \langle  v,  \nabla_{\dif \varphi (e_i)} \xi \rangle + \langle \dif \varphi(e_i), \nabla_{v}\xi \circ \varphi \rangle \right) \vol_g\\
& = \int_D \big\{\widehat{\xi}(\eta(v)) + \dif \eta (v , \dif\varphi \big( \widehat \xi)\big)\}\vol_g\\
& = \int_D \big\{-\eta(v)\di \widehat \xi  + \dif \eta \big(v , \dif\varphi(\widehat \xi)\big)\}\vol_g,
\end{split}
\end{equation*}
where in the last equality we have used the divergence theorem and the fact that $v$ has support in $D$. Combining with \eqref{2ndterm} we get
\begin{equation*}
\begin{split}
\frac{d}{d t}\Bigl\lvert _{t=0} E(\varphi_t; D) = & \int_D 
\big\{-c\eta(v)\di (\varphi^* \eta)^\sharp  + 
\dif \eta \big(v ,  \dif\varphi\left(c(\varphi^* \eta)^\sharp + c^{-1} (\delta \dif \varphi^* \eta)^\sharp\right)\}\vol_g\\
= &c\int_D \left\langle v ,  - (\di (\varphi^* \eta)^{\sharp})\xi + 2\phi \dif\varphi\left((\varphi^* \eta)^{\sharp} + c^{-2} (\delta \dif \varphi^* \eta)^{\sharp}\right)\right\rangle\vol_g,
\end{split}
\end{equation*}
where we used the property  \eqref{phidef}. As $\phi X \perp \xi$ for any tangent vector $X$, and since $v$ and $D$ were arbitrary, we conclude that $\varphi$ is a critical point of $E$ if and only if the claimed Euler-Lagrange equations are satisfied.
\end{proof}

Assume $\varphi$ is strong Beltrami (satisfies \eqref{bog}). Then as we have already shown, $\vphi^*\eta$ is coclosed (see (\ref{coclosed})), so automatically satisfies the second EL equation. Since $\vphi$ minimizes $E$ in its homotopy class, we expect it to satisfy the first EL equation also. As we have already observed (\ref{cliatsc}), $\delta\d\vphi^*\eta=
c^2\vphi^*\eta$, so $\vphi$ satisfies the EL equations if and only if $\left(\d\vphi((\vphi^*\eta)^\sharp)\right)^\DD=0$, that is, if and only if $\d\vphi(\wh\xi)$ is pointwise parallel to $\xi\circ\varphi$. We will shortly verify this expectation without appeal to variational methods (see Proposition \ref{prop2} $(iii)$).

For each $x\in M$, denote by $\d\vphi^T:T_{\vphi(x)}N\ra T_xM$ the adjoint of $\d\vphi:T_xM\ra T_{\vphi(x)}N$ (the unique linear map with $g(X,\d\vphi^T(Y))\equiv h(\d\vphi(X),Y)$). The {\em Cauchy-Green tensor} of $\vphi$,
$\d\vphi^T\circ\d\vphi$, is a non-negative, self-adjoint $(1,1)$ tensor field on $M$ whose eigenvalues $\lambda_1^2,\lambda_2^2,\lambda_3^2$ have proven useful in the study of Skyrme models (in the Skyrme literature,
$\d\vphi^T\circ\d\vphi$ is usally denoted ${D}$ and called the {\em strain tensor} \cite{mant}). Recall that
$x\in M$ is a critical point of $\vphi$ if $\d\vphi_x$ has nonmaximal rank or, equivalently, if at least one eigenvalue of $(\d\vphi^T\circ\d\vphi)_x$ vanishes.

\begin{pr}[Properties of strong Beltrami maps]\label{prop2}
Let $\varphi: (M,g) \to (N,h,\eta)$ be a strong $c$ - Beltrami map. Then:
\begin{itemize}
\item[$(i)$] The critical points of $\varphi$ coincide with the zeros of the $1$-form $\varphi^* \eta$. In particular, the critical set of $\vphi$ is nowhere dense.

\item[$(ii)$]  The plane field defined by the equation $\varphi^* \eta=0$ is a confoliation. 

\item[$(iii)$] $\d\vphi(\wh{\xi})$ is pointwise collinear with $\xi\circ\vphi$ with proportionality factor $|\vphi^*\eta|^2$.

\item[$(iv)$] $\wh\xi$ is an eigenvector of $\d\vphi^T\circ\d\vphi$ with eigenvalue $\lambda_1^2=|\vphi^*\eta|^2$.

\item[$(v)$] The eigenvalues  of $\d\vphi^T\circ\d\vphi$ satisfy $4\lambda_2^2\lambda_3^2=c^2\lambda_1^2$.

\end{itemize}
\end{pr}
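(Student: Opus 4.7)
The whole proposition will pivot on a single algebraic identity obtained by applying $*$ to the Bogomol'nyi equation \eqref{bog}:
\begin{equation*}
\dif\vphi^*\eta=c*\!\vphi^*\eta,\qquad\text{hence}\qquad \vphi^*(\eta\wedge\dif\eta)=\vphi^*\eta\wedge\dif\vphi^*\eta=c|\vphi^*\eta|^2\vol_g.
\end{equation*}
Since $\tfrac12\eta\wedge\dif\eta$ is the volume form of $(N,h)$, this says the Jacobian of $\vphi$ is $J=\tfrac{c}{2}|\vphi^*\eta|^2\geq 0$.

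Parts $(i)$, $(ii)$, $(v)$ now drop out. For $(i)$: $J$ vanishes exactly where $\vphi^*\eta$ does, and the vanishing of $J$ is the definition of a critical point; for the nowhere-dense claim, I would invoke \eqref{cliatsc}, which makes $\vphi^*\eta$ an eigenform of the Hodge Laplacian $\Delta$ with eigenvalue $c^2>0$, so Aronszajn's unique continuation theorem for elliptic systems rules out $\vphi^*\eta$ vanishing on any open set (the alternative being $\vphi^*\eta\equiv 0$, contradicting non-isotropy). For $(ii)$, the identity $\vphi^*\eta\wedge\dif\vphi^*\eta=c|\vphi^*\eta|^2\vol_g\geq 0$ is literally the confoliation condition on the (possibly singular) plane field $\ker\vphi^*\eta$. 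For $(v)$, assuming $(iv)$ (so $\lambda_1^2=|\vphi^*\eta|^2$), square the Jacobian identity: $\lambda_1^2\lambda_2^2\lambda_3^2=\det(\dif\vphi^T\dif\vphi)=J^2=\tfrac{c^2}{4}\lambda_1^4$, which yields $4\lambda_2^2\lambda_3^2=c^2\lambda_1^2$ (valid also where $\lambda_1=0$).

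The genuine work is in $(iii)$. I would use the defining property \eqref{phidef} of $\phi$ to write, for $V_1,V_2\in T_xM$,
\begin{equation*}
(\vphi^*\dif\eta)(V_1,V_2)=2h(\dif\vphi(V_1),\phi\dif\vphi(V_2)),
\end{equation*}
and then evaluate the left-hand side with Bogomol'nyi: $(\vphi^*\dif\eta)(V_1,V_2)=c(*\vphi^*\eta)(V_1,V_2)=c\,\vol_g(\wh\xi,V_1,V_2)$. Setting $V_1=\wh\xi$ kills the right-hand side, giving $h(\dif\vphi(\wh\xi),\phi\dif\vphi(V_2))=0$ for all $V_2\in T_xM$. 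At a regular point, $\dif\vphi_x$ is surjective, so $\phi\dif\vphi(V_2)$ ranges over $\phi(T_{\vphi(x)}N)=\DD_{\vphi(x)}$, whence $\dif\vphi(\wh\xi)\perp\DD$ and therefore $\dif\vphi(\wh\xi)=\mu\,\xi$ for some scalar $\mu$. Pairing with $\xi$ gives $\mu=h(\dif\vphi(\wh\xi),\xi)=\vphi^*\eta(\wh\xi)=|\vphi^*\eta|^2$. By $(i)$ the regular points are dense, so the identity extends by continuity to the critical set, where both sides vanish. Part $(iv)$ is then immediate: apply the adjoint $\dif\vphi^T$ to $\dif\vphi(\wh\xi)=|\vphi^*\eta|^2\xi$ and use $\dif\vphi^T(\xi)=\wh\xi$ (which holds by the very definition of $\wh{\,\cdot\,}$ with $v=\xi$).

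The step most likely to require care is the surjectivity argument in $(iii)$: one must verify that at a regular point the composite $\phi\circ\dif\vphi$ exhausts $\DD$, and then combine with part $(i)$ to propagate the identity to the full (possibly non-empty) critical locus by continuity. Everything else reduces to unpacking the central Jacobian identity.
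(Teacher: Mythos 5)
Your proposal is correct and follows essentially the same route as the paper: the central identity $\vphi^*(\eta\wedge\d\eta)=c|\vphi^*\eta|^2\vol_g$ drives $(i)$, $(ii)$ and $(v)$, unique continuation (Aronszajn) gives the nowhere-dense claim, and your argument for $(iii)$ --- showing $\d\vphi(\wh\xi)\perp\DD$ via \eqref{phidef} at regular points and extending by density --- is just the paper's computation with $\d\eta$ rewritten through $\phi$. The only cosmetic looseness is the parenthetical ``valid also where $\lambda_1=0$'' in $(v)$, which, as in the paper, really rests on the same density-plus-continuity argument you already spelled out for $(iii)$.
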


\begin{proof} 
$(i)$ $x\in M$ is a critical point of $\vphi$ if and only if $\vphi^*\Omega=0$ at $x$ where $\Omega$ is any nondegenerate $3$-form on $N$. This holds, in particular, for the choice $\Omega=\eta\wedge\d\eta$. But, by (\ref{bog}),
\beq\label{useful}
\vphi^*(\eta\wedge\d\eta)=\vphi^*\eta\wedge c*\vphi^*\eta=c|\vphi^*\eta|^2\vol_g
\eeq
so $x$ is critical precisely when $(\vphi^*\eta)_x=0$.

It follows that the critical set of $\vphi$ is the zero set of the Beltrami field $\wh\xi$. But since any Beltrami field automatically satisfies the elliptic equation obtained by taking the $\curl$ of the defining equation, it has the unique continuation property \cite{aro} so its zero set is nowhere dense (or otherwise it is identically zero, which is excluded, since Beltrami maps are nonisotropic by definition).
 
 $(ii)$ From \eqref{useful} we have, in particular, $\vphi^*\eta\wedge\d\vphi^*\eta\geqslant 0$ so that the codimension one distribution (plane field) given by kernel of the 1-form $\vphi^*\eta$ on $M$ defines a confoliation (see \cite{ET} for the definition).

$(iii)$ At any critical point $\wh\xi(x)=0$ so $\d\vphi_x(\wh\xi)=0$ and the conclusion holds trivially.
At a point where $\varphi$ has maximal rank, any $v \in \Gamma(\varphi^{-1}TN)$ can be written as $v= \dif \varphi(Y)$ for some vector field $Y$. Then for such an arbitrary $v$ we have 
\bea
\d\eta(v,\d\vphi(\wh\xi))=\vphi^*\d\eta(Y,\wh\xi)
=c(*\vphi^*\eta)(Y,\wh\xi)
&=&c(*\vphi^*\eta)(Y,(\vphi^*\eta)^\sharp)\nonumber\\
&=&c*(\vphi^*\eta\wedge\vphi^*\eta)(Y)=0
\eea
 where we used the general identity for  1-forms $\theta,\beta$,
$\imath_{\theta^\sharp}\ast \beta \equiv \ast(\theta \wedge \beta)$. By using \eqref{defxi}, it follows that $\dif\varphi(\wh\xi)$ must be collinear with $\xi \circ \varphi$ at every $x$ where $\d\vphi$ has maximal rank.
Thus $\d\vphi(\wh\xi)=\mu\xi\circ\vphi$ for some function $\mu:M\ra\R$. Since $\xi$ has unit length,
\bea
\mu=h(\d\vphi(\wh\xi),\xi\circ\vphi)
=g(\wh\xi,\d\vphi^T(\xi\circ\vphi))
=g(\wh\xi,\wh\xi)\label{caslai}
=|\vphi^*\eta|^2.
\eea

$(iv)$ By $(iii)$ $\d\vphi(\wh\xi)=\mu\xi\circ\vphi$ where $\mu = |\vphi^*\eta|^2$. Applying $\d\vphi^T$ to this, and recalling that $\wh{\xi}=\d\vphi^T(\xi\circ\phi)$, $(\d\vphi^T\circ\d\vphi)(\wh\xi)=\mu\wh\xi$, so $\wh\xi$ is an eigenvector of the Cauchy-Green tensor with eigenvalue $\mu$. 

$(v)$ For any map $\vphi: M \ra N$, $|\vphi^*\vol_h|^2=\det(\d\vphi^T\circ\d\vphi)=\lambda_1^2\lambda_2^2\lambda_3^2$. But $\vol_h=\frac12\eta\wedge\d\eta$, so, by (\ref{useful}),
\beq\label{weaker}
\lambda_1^2\lambda_2^2\lambda_3^2=\frac14c^2|\vphi^*\eta|^4=\frac14c^2\lambda_1^4.
\eeq
Hence, away from the critical set, $4\lambda_2^2\lambda_3^2=c^2\lambda_1^2$. But the critical set is nowhere dense, and $\lambda_i^2:M\ra\R$ are continuous, so this equation holds globally.
\end{proof}

We remark that, by $(i),(iv)$ and $(v)$ of the above and by using \cite{stern}, we have:
\begin{cor}
$(i)$ A strong Beltrami map can have no points where its differential has rank $2$. 

\noindent $(ii)$ A strong Beltrami map has non-negative Jacobian (determinant) at each point of $M$. In particular, it is surjective if $M$ is compact. 
\end{cor}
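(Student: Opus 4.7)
The plan is a direct consequence of the algebraic constraints established in Proposition \ref{prop2}, combined with a single integration; no essentially new analysis is required. For part $(i)$, I would argue by contradiction. Suppose $x\in M$ is a point where $(\dif\varphi)_x$ has rank exactly $2$. Then $x$ is a critical point of $\varphi$, so by Proposition \ref{prop2}$(i)$ we have $(\varphi^*\eta)_x=0$, and part $(iv)$ then forces $\lambda_1^2(x)=|\varphi^*\eta|^2(x)=0$. Substituting this into the pointwise relation $4\lambda_2^2\lambda_3^2=c^2\lambda_1^2$ of part $(v)$ yields $\lambda_2^2(x)\lambda_3^2(x)=0$, so a second eigenvalue of $\dif\varphi^T\circ \dif\varphi$ also vanishes at $x$. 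Hence $\mathrm{rank}(\dif\varphi)_x\leq 1$, contradicting the assumption of rank $2$.

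For part $(ii)$, identity \eqref{useful} already exhibits the Jacobian directly as $\varphi^*\vol_h=\tfrac{c}{2}|\varphi^*\eta|^2\vol_g$, a manifestly non-negative multiple of $\vol_g$, which proves the pointwise statement. When $M$ is compact, integrating gives
\begin{equation*}
\deg(\varphi)\,\Vol(N)=\int_M\varphi^*\vol_h=\frac{c}{2}\int_M|\varphi^*\eta|^2\,\vol_g,
\end{equation*}
and because a Beltrami map is nonisotropic by definition the right-hand side is strictly positive. Hence $\deg(\varphi)>0$, and a continuous map of nonzero degree between closed oriented $n$-manifolds is surjective. I expect \cite{stern} to enter here to supply the clean form of this surjectivity principle — or possibly a stronger density/openness statement compatible with the bare $C^2$ regularity assumed of $\varphi$.

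There is no serious obstacle, since the whole argument is algebraic together with a single integration. The one point requiring care is that the identity $4\lambda_2^2\lambda_3^2=c^2\lambda_1^2$ must be used as a genuinely pointwise statement at the critical locus where $\lambda_1^2=0$; this is already built into the statement of Proposition \ref{prop2}$(v)$, which extends the relation from the open complement of the (nowhere-dense) critical set to all of $M$ by continuity, so nothing further is needed.
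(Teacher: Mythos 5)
Your proposal is correct and follows essentially the route the paper intends: the paper offers no written proof beyond the pointer ``by $(i)$, $(iv)$ and $(v)$ \ldots and by using \cite{stern}'', and your part $(i)$ is exactly the contradiction via $4\lambda_2^2\lambda_3^2=c^2\lambda_1^2$, while your part $(ii)$ reads the non-negative Jacobian off \eqref{useful}. The only cosmetic difference is that you derive surjectivity from the positivity of $\deg(\varphi)=\frac{c}{2\Vol(N)}\int_M|\varphi^*\eta|^2\vol_g$ rather than quoting the Sternberg--Swan theorem directly; the two are equivalent here.
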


Let us focus now on the zero set of strong Beltrami fields, which yields the critical set of strong Beltrami maps. First notice that, according to \cite{generic}, for a generic Riemannian metric the zero set of a strong Beltrami field consists of isolated points. Beside this generic situation, only 1-dimensional zero sets (and critical sets of the associated Beltrami map) are allowed in the analytic setting, as shown in the following

\begin{pr}
If a Riemannian $3$-manifold is analytic (i.e. both the metric and the differential structure belong to $C^\omega$),  then the zero set of a strong Beltrami field on that manifold must have codimension $\geqslant 2$. 
\end{pr}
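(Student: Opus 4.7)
The plan is to argue by contradiction: since the zero set $Z$ of $X$ is already known to be nowhere dense (by unique continuation), I only need to rule out a $2$-dimensional component. The first step is to upgrade the regularity of $X$ to real-analytic. As noted in \eqref{cliatsc}, the dual $1$-form $\alpha = X^\flat$ of a strong $c$-Beltrami field satisfies $\Delta \alpha = c^2 \alpha$ together with $\delta\alpha = 0$. Since the differential structure and the metric are analytic, the Hodge Laplacian is an elliptic operator with real-analytic coefficients, and the analytic version of elliptic regularity (Morrey--Nirenberg) forces $\alpha$, hence $X$, to be real-analytic.

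Suppose now for contradiction that $Z = X^{-1}(0)$ has a component of dimension $2$. The structure theory of real-analytic sets guarantees that $Z$ contains, near some point $p$, a smooth real-analytic hypersurface $H$; I choose an analytic local defining function $f$ for $H$ (so $f|_H = 0$ and $\d f|_H \neq 0$). Working component-wise, the analytic Hadamard lemma then lets me factor $X = fY$ with $Y$ a real-analytic vector field defined near $p$.

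The heart of the argument is an inductive iteration of this factorisation. Using the Riemannian $3$-dimensional identities
\[
\curl(f^n W) = nf^{n-1}\grad f \times W + f^n \curl W, \qquad \div(f^n W) = nf^{n-1} g(\grad f, W) + f^n \div W,
\]
substituting $X = f^n Y_n$ into $\curl X = cX$ and $\div X = 0$, cancelling an overall factor of $f^{n-1}$ in the ring of analytic functions, and restricting to $H$ yields $\grad f \times Y_n = 0$ and $g(\grad f, Y_n) = 0$ on $H$. Because $\grad f \neq 0$ on $H$, these two conditions force $Y_n|_H = 0$, so another application of the Hadamard lemma gives $Y_n = f Y_{n+1}$ with $Y_{n+1}$ analytic, hence $X = f^{n+1} Y_{n+1}$. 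Iterating from $n=1$, the Taylor series of $X$ at every point of $H$ vanishes to all orders. Analyticity together with connectedness of $M$ then force $X \equiv 0$, contradicting the nontriviality of a strong Beltrami field.

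The step I expect to require the most care is the structural claim invoked in the second paragraph: that a $2$-dimensional piece of an analytic set contains a smooth analytic hypersurface. This rests on the fact that the regular locus of a pure-dimensional real-analytic set is open and dense in it, which is standard but must be quoted correctly. A self-contained alternative I would fall back on is to recast $(\curl X - cX,\,\div X) = 0$ as a first-order analytic system with $H = \{x^1 = 0\}$ as a noncharacteristic initial surface and invoke Cauchy--Kowalewski uniqueness directly for the zero Cauchy problem; either route reduces the remaining work to standard results once analyticity of $X$ has been secured.
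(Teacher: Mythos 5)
Your argument is correct, but it takes a genuinely different route from the paper. The paper's proof is a two-line citation: it observes (as you do in your first step, via analytic elliptic regularity applied to $\Delta\vphi^*\eta=c^2\vphi^*\eta$, $\delta\vphi^*\eta=0$) that a strong Beltrami field on an analytic Riemannian $3$-manifold is necessarily real-analytic, and then invokes Gerner's structure theorem for zero sets of real-analytic Beltrami fields, which directly excludes codimension-one components. You instead reprove the relevant special case from scratch: assuming a $2$-dimensional piece of the zero set, you extract a smooth analytic hypersurface $H$, and run the factorisation $X=f^nY_n$ using $\curl(f^nW)=nf^{n-1}\grad f\times W+f^n\curl W$ and the divergence analogue; restricting to $H$ after cancelling $f^{n-1}$ gives $\grad f\times Y_n=0$ and $g(\grad f,Y_n)=0$, hence $Y_n|_H=0$ since $\grad f\neq0$ there, and the induction yields infinite-order vanishing of $X$ along $H$ and thus $X\equiv0$ by analyticity, a contradiction. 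This computation is sound (note that the divergence constraint, i.e.\ coclosedness of $\vphi^*\eta$, is genuinely needed: the curl equation alone only forces $Y_n|_H$ to be parallel to $\grad f$). What the citation buys is brevity and the full strength of Gerner's classification (which says more than codimension $\geq2$); what your argument buys is self-containedness, at the price of one external input you correctly flag, namely the {\L}ojasiewicz-type fact that a $2$-dimensional real-analytic set has a nonempty locus of smooth $2$-dimensional points. With that quoted precisely (or replaced by your Cauchy--Kowalewski fallback, which works because the system $(\curl-c,\div)$ determines all normal derivatives of $X$ along a hypersurface from $X$ itself), your proof is complete.
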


\begin{proof} 
Such a field is necessarily analytic, so this follows from the more detailed characterization of its zero set established by Gerner \cite{gerner}.
\end{proof}

\paragraph{Boothby-Wang targets.}
Let us recall that the symplectic Dirichlet energy of a map $\phi$ taking values in a symplectic manifold $(S, \Omega)$ is $F(\phi)=\tfrac{1}{2}\norm{\phi^* \Omega}_{L^2}^2$. This energy was introduced in \cite{sve, sve1} and its critical points are also called $\sigma_2$-\emph{critical maps} \cite{slo}. When $S= \Ss^2(1/2)$, the functional $F$ represents the strong coupling limit of the static Skyrme-Faddeev energy \cite{fad}.
 
The following proposition reveals an important connexion between Beltrami and $\sigma_2$-critical maps in the case when the codomain  is a compact {\em regular} contact manifold. Regular, in this context, means that the Reeb field $\xi$ is a regular vector field: every point of the manifold has a neighbourhood through which every (necessarily closed) integral curve of $\xi$ passes at most once. By \cite[$\S 3.3$]{blair} any such manifolds is the total space of a circle bundle over a surface $\Sigma$ of integral class and the contact form yields a connexion form with curvature given by the symplectic form on $\Sigma$: $\pi^*\Omega=\tfrac{1}{2}\dif \eta$. We usually refer to these manifolds as \textit{Boothby-Wang fibrations}. A typical example is the Hopf fibration of the 3-sphere.

\begin{pr}\label{BW}
Let $(N, \eta)$ be a Boothby-Wang fibration over the Riemannian 2-manifold $\Sigma$ endowed with the area 2-form $\Omega$. Denote by $\pi : N \to \Sigma$ the bundle projection. If $\varphi:(M, g) \to (N, \eta)$ is a strong Beltrami map, then the map $\widetilde{\varphi}=\pi \circ \varphi: (M, g) \to (\Sigma, \Omega)$ is a $\sigma_2$-critical map. 
\end{pr}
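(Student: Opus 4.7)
The plan is to translate $\sigma_2$-criticality of $\widetilde\varphi$ into a statement about the second term of $E$, whose first variation has already been computed inside the proof of the Euler-Lagrange equations. I would begin by using the Boothby-Wang relation $\pi^*\Omega=\tfrac12\dif\eta$ to write $\widetilde\varphi^*\Omega=\varphi^*\pi^*\Omega=\tfrac12\dif\varphi^*\eta$, so that
$$F(\widetilde\varphi)=\tfrac18\int_M|\dif\varphi^*\eta|^2\,\vol_g.$$
Proving $\widetilde\varphi$ is $\sigma_2$-critical then amounts to showing that the derivative of this integral along every first-order variation of $\widetilde\varphi$ vanishes.

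Given a compactly supported variation $\widetilde\varphi_t$ of $\widetilde\varphi$ with variation field $w\in\Gamma(\widetilde\varphi^{-1}T\Sigma)$, I would exploit the fact that $\pi$ is a submersion to select a compactly supported lift $v\in\Gamma(\varphi^{-1}TN)$ with $\dif\pi\circ v=w$, and set $\varphi_t(x):=\exp_{\varphi(x)}(tv(x))$. Then $\pi\circ\varphi_t$ is a variation of $\widetilde\varphi$ agreeing with $\widetilde\varphi_t$ to first order at $t=0$, so
$$\frac{d}{dt}\bigg|_{t=0}F(\widetilde\varphi_t)=\frac{d}{dt}\bigg|_{t=0}F(\pi\circ\varphi_t)=\tfrac18\frac{d}{dt}\bigg|_{t=0}\int_M|\varphi_t^*\dif\eta|^2\,\vol_g,$$
which by \eqref{2ndterm} equals $\tfrac14\int_M\dif\eta\big(v,\dif\varphi((\delta\dif\varphi^*\eta)^\sharp)\big)\,\vol_g$.

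To close the argument I would invoke the Beltrami hypothesis: equation \eqref{cliatsc} gives $(\delta\dif\varphi^*\eta)^\sharp=c^2\widehat\xi$, while Proposition \ref{prop2}(iii) gives $\dif\varphi(\widehat\xi)=|\varphi^*\eta|^2\,\xi\circ\varphi$. The integrand then reduces to $c^2|\varphi^*\eta|^2\,\dif\eta(v,\xi\circ\varphi)$, which vanishes pointwise because the Reeb identity $\iota_\xi\dif\eta=0$ from \eqref{defxi} forces $\dif\eta(\cdot,\xi\circ\varphi)\equiv 0$. Since the variation was arbitrary, $\widetilde\varphi$ is a critical point of $F$. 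The one place in the argument that needs justification is the lifting of variations, but it is completely routine since $\pi$ is a submersion; the rest is just a careful reuse of the variational identity \eqref{2ndterm}, so I do not foresee any serious obstacle.
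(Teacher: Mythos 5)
Your proof is correct, and it rests on exactly the same three ingredients as the paper's: the Boothby--Wang relation $\pi^*\Omega=\tfrac12\d\eta$, the identity $\delta\d\vphi^*\eta=c^2\vphi^*\eta$ from \eqref{cliatsc}, and Proposition \ref{prop2}$(iii)$. The difference is in the packaging. The paper does not recompute a first variation at all: it invokes the Euler--Lagrange characterization of $\sigma_2$-critical maps from \cite[Corollary 2.4]{sve}, namely that $\widetilde\vphi$ is $\sigma_2$-critical iff $\d\widetilde\vphi\big((\delta\widetilde\vphi^*\Omega)^\sharp\big)=0$, reduces this pointwise quantity to a multiple of $\d\pi\big(\d\vphi(\wh\xi)\big)$, and kills it using the fact that the Reeb field is tangent to the fibres of $\pi$, i.e.\ $\d\pi(\xi)=0$. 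You instead differentiate $F(\widetilde\vphi_t)$ directly, which forces you to lift an arbitrary variation field $w$ on $\Sigma$ to a field $v$ upstairs (routine, as you say, via a horizontal lift through the submersion, together with the standard fact that the first variation depends only on the first-order variation field), reuse \eqref{2ndterm}, and then kill the integrand with the Reeb identity $\iota_\xi\d\eta=0$ rather than with $\d\pi(\xi)=0$ --- two equivalent faces of the Boothby--Wang structure, since the horizontal distribution is $\ker\eta$ and $\d\eta$ annihilates the vertical direction. What your route buys is self-containedness: you do not need to quote the $\sigma_2$-criticality criterion from \cite{sve}, only the variational identity \eqref{2ndterm} already present in the paper. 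What it costs is the extra lifting step, which the paper's pointwise argument avoids entirely. Both are valid; the paper's is shorter given that \cite[Corollary 2.4]{sve} is available.
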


\begin{proof}
According to \cite[Corollary 2.4.]{sve} $\widetilde{\varphi}$ is a $\sigma_2$-critical map if and only if
 $\dif \widetilde{\varphi} \big( (\delta\widetilde{\varphi}^*\Omega)^\sharp\big)=0$. By the fundamental property of Boothby-Wang fibrations, $\pi^*\Omega=\tfrac{1}{2}\dif \eta$, so in our case 
 $\dif \widetilde{\varphi} \big( (\delta\widetilde{\varphi}^*\Omega)^\sharp\big)=\tfrac{1}{2}
 \dif \pi\big(\dif \varphi (( \delta\dif \varphi^*\eta)^\sharp)\big)$. As $\varphi$ is a strong Beltrami map, we can use \eqref{cliatsc} then Proposition \ref{prop2} $(iii)$, to obtain
 $\dif \widetilde{\varphi} \big( (\delta\widetilde{\varphi}^*\Omega)^\sharp\big)=
\tfrac{c^2}{2} \dif \pi\big(\dif \varphi ((\varphi^*\eta)^\sharp)\big)= 
\tfrac{c^2}{2} \dif \pi\big(\dif \varphi (\widehat{\xi})\big)= \tfrac{c^2}{2}  \abs{\widehat{\xi}}^2\dif \pi(\xi)=0$, where in the last equality we used the fact that the Reeb field is tangent to the fibres of the Boothby-Wang fibration. 
\end{proof}

Combining this result with Proposition \ref{prop2} $(iii)$  we deduce that the Beltrami field $\widehat \xi$ on $(M,g)$ associated with a strong Beltrami map onto a Boothby-Wang fibration (such as e.g. $\Ss^3$) has only closed orbits. This is a very particular dynamical feature that shows us the strength of the Beltrami map condition in this context.

\paragraph{Sasakian targets.}
A contact manifold $(N, h, \eta, \xi)$ is {\em Sasakian} if its Reeb field $\xi$ is a Killing vector field for the adapted metric $h$. A well known example in dimension $3$ is $\Ss^3$ with its standard contact-metric structure (for which $\xi$ is the Hopf field). Actually there are three diffeomorphism classes $\Ss^3/\Gamma$, $\mathrm{Nil}^3/\Gamma$, and $\widetilde{SL}(2, \RR)/\Gamma$ supporting a compact Sasakian 3-manifold structure \cite{gei} (here $\Gamma$ is any discrete subgroup of the isometry group with respect to the canonical metric). The companion classification of the Sasakian metrics \cite{bel1, bel2} reveals two families of metric deformations, including the notable example of the weighted Sasakian metric on $S^3$.

Recall that, given a map $\varphi: (M,g) \to (N,h)$, the section of the the pullback bundle $\varphi^{-1}TN$ given as $\tau(\varphi)=\nabla_{e_i}^{\varphi}\dif \varphi(e_i)-\dif \varphi(\nabla_{e_i}e_i)$ is called the \textit{tension field} of $\varphi$, where $\{e_i \}$ is an orthonormal frame. A map with zero tension field is a \textit{harmonic map} \cite{ES}. The following proposition shows that the tension field of a strong Beltrami map is in (the pullback of) the contact distribution of the codomain, provided that this is Sasakian.

\begin{pr}[Tension field of a Beltrami map]\label{harmonic}
Let $\varphi: (M^3,g) \to (N^3,h,\eta,\xi)$ be strong Beltrami and $N$ be Sasakian. 
Then  
\begin{align}
&\langle \tau(\varphi), \xi \circ \varphi \rangle =0 \\
&\ip{\phi\tau(\vphi)+\left(|\d\vphi|^2 - \abs{\varphi^* \eta}^2 - \tfrac{c^2}{2}\right)\xi \circ \varphi, \ \d\vphi(X)} - \sum_i\ip{\d\vphi(e_i), \ \phi \nabla d\vphi(e_i, X)}=0. \label{phi_tens}
\end{align}
\end{pr}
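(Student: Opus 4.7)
The plan is to exploit the two Sasakian identities $\nabla_Z\xi=-\phi Z$ and $(\nabla_Z\phi)V=h(Z,V)\xi-\eta(V)Z$ (which together characterise the Sasakian condition) in order to rewrite the coderivative statements $\delta\varphi^*\eta=0$ (valid by \eqref{coclosed}) and $\delta\d\varphi^*\eta=c^2\varphi^*\eta$ (valid by \eqref{cliatsc}) as pointwise identities involving the tension field. The key representations are $\varphi^*\eta(Y)=\ip{\d\varphi(Y),\xi\circ\varphi}$ and $\varphi^*\d\eta(Y,X)=2\ip{\d\varphi(Y),\phi\d\varphi(X)}$, the latter being simply \eqref{phidef} pulled back by $\varphi$.

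For the first identity I expand $\delta\varphi^*\eta=-\sum_i(\nabla_{e_i}\varphi^*\eta)(e_i)$ in a local geodesic frame $\{e_i\}$. The product rule yields a term $\ip{\nabla\d\varphi(e_i,e_i),\xi\circ\varphi}$ which sums to $\ip{\tau(\varphi),\xi\circ\varphi}$, and a term $\ip{\d\varphi(e_i),\nabla^\varphi_{e_i}(\xi\circ\varphi)}=-\ip{\d\varphi(e_i),\phi\d\varphi(e_i)}$ (using $\nabla_Z\xi=-\phi Z$) which vanishes termwise by the skew-symmetry of $\phi$ relative to $h$. Thus $\delta\varphi^*\eta=-\ip{\tau(\varphi),\xi\circ\varphi}$; combined with $\delta\varphi^*\eta=0$ this yields the first assertion.

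For the second identity I expand $(\delta\omega)(X)=-\sum_i(\nabla_{e_i}\omega)(e_i,X)$ with $\omega=\d\varphi^*\eta=2\ip{\d\varphi(\cdot),\phi\d\varphi(\cdot)}$. The product rule produces three groups of summands: a $\nabla\d\varphi$-term in the left slot that aggregates to $-2\ip{\tau(\varphi),\phi\d\varphi(X)}$; a $\nabla\d\varphi$-term in the right slot yielding $-2\sum_i\ip{\d\varphi(e_i),\phi\nabla\d\varphi(e_i,X)}$; and a Sasakian correction from $\nabla^\varphi_{e_i}((\phi\circ\varphi)\d\varphi(X))$, which by the formula $(\nabla_{\d\varphi(e_i)}\phi)(\d\varphi X)=\ip{\d\varphi(e_i),\d\varphi(X)}\xi\circ\varphi-\varphi^*\eta(X)\,\d\varphi(e_i)$ contributes $2|\d\varphi|^2\varphi^*\eta(X)-2\sum_i\ip{\d\varphi(e_i),\d\varphi(X)}\varphi^*\eta(e_i)$. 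Proposition~\ref{prop2}(iii) then rewrites the remaining trace as $\ip{\d\varphi(\wh\xi),\d\varphi(X)}=|\varphi^*\eta|^2\varphi^*\eta(X)$. Equating the total with $c^2\varphi^*\eta(X)$, dividing by two, and converting $\ip{\tau(\varphi),\phi\d\varphi(X)}=-\ip{\phi\tau(\varphi),\d\varphi(X)}$ by skew-symmetry rearranges the result into precisely \eqref{phi_tens}.

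The main obstacle will be the sign-and-factor bookkeeping in the second step. One has to simultaneously track the factor of two in \eqref{phidef} (which reflects the paper's convention for $\d\eta$), the skew-symmetry of $\phi$ relative to $h$, and the two separate Sasakian correction terms generated when differentiating $\phi\circ\varphi$ along $\d\varphi$. Proposition~\ref{prop2}(iii) must be invoked at precisely the right moment: only after the trace $\sum_i\ip{\d\varphi(e_i),\d\varphi(X)}\varphi^*\eta(e_i)$ is collapsed into $|\varphi^*\eta|^2\varphi^*\eta(X)$ does the combined coefficient $|\d\varphi|^2-|\varphi^*\eta|^2-\tfrac{c^2}{2}$ that multiplies $\xi\circ\varphi$ in \eqref{phi_tens} actually emerge.
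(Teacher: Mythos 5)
Your proposal is correct and follows essentially the same route as the paper: expand $\delta\varphi^*\eta$ and $(\delta\varphi^*\d\eta)(X)$ in an orthonormal frame, use $\varphi^*\d\eta(Y,X)=2\ip{\d\varphi(Y),\phi\,\d\varphi(X)}$ together with the Sasakian identity $(\nabla_Z\phi)V=h(Z,V)\xi-\eta(V)Z$, and invoke Proposition \ref{prop2}$(iii)$ to collapse the trace term. The only cosmetic difference is that for the first identity the paper kills the term $\ip{\d\varphi(e_i),\nabla^\varphi_{e_i}\xi\circ\varphi}$ via the Killing property of $\xi$, while you use $\nabla_Z\xi=-\phi Z$ and skew-symmetry of $\phi$, which amounts to the same thing.
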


\begin{proof} 
Since $\vphi$ is strong Beltrami, $\vphi^*\eta$ is coclosed.
But in any orthonormal frame $\{e_i \}$ for $(M,g)$ (with summation over the repeated index $i$ understood),
\begin{equation}
\begin{split}
\delta \varphi^* \eta    & = \ \left(\nabla_{e_i}  \varphi^* \eta\right)(e_i)\\
& = \ e_i\left[\eta(\dif \varphi(e_i))\right]-\eta(\dif \varphi(\nabla_{e_i} e_i))\\
& = \ e_i[\langle \dif \varphi(e_i), \xi \circ \varphi \rangle]-\langle\dif \varphi(\nabla_{e_i} e_i), \xi \circ \varphi \rangle\\
& = \ \langle \tau(\varphi), \xi \circ \varphi \rangle+\langle\dif \varphi(e_i), \nabla_{e_i}^{\varphi}\xi \circ \varphi \rangle= \ \langle \tau(\varphi), \xi \circ \varphi \rangle ,
\end{split}
\end{equation}
where in the last equality we used the fact that $\xi$ is Killing for $(N,h)$, so $h(Y, \nabla_{Y}\xi)=0$ for any $Y$. Therefore the condition $\delta \varphi^* \eta=0$ is \textit{equivalent} with $\tau(\varphi)\perp \xi$.

From \eqref{bog} it follows also that $\delta \varphi^* \dif \eta={c^2} \, \varphi^* \eta$. Again summing over a local orthonormal frame $\{e_i\}$ on $M$, we have, by virtue of \eqref{phidef}, for all $X\in \Gamma(TM)$,
\begin{equation}\label{cala}
\begin{split}
(\delta \varphi^* \dif \eta)(X)    & = - \left(\nabla_{e_i}  \varphi^* \dif \eta\right)(e_i, X)\\
& = -e_i[\d\eta(\d\vphi(e_i)),\d\vphi(X))]+\vphi^*\d\eta(\nabla_{e_i} e_i, X) +\vphi^*\d\eta(e_i, \nabla_{e_i}X)\\
& =2\{ - e_i\left[\langle \dif \varphi(e_i), \phi \dif \varphi(X)\rangle\right]
+\langle \dif \varphi(\nabla_{e_i}e_i), \phi \dif \varphi(X)\rangle
+\langle \dif \varphi(e_i), \phi \dif \varphi(\nabla_{e_i}X)\rangle \}\\
& = -2\ip{\nabla\d\vphi(e_i, e_i), \phi\d\vphi(X)} - 2\ip{\d\vphi(e_i),\nabla_{e_i}^\vphi(\phi\d\vphi(X))-\phi\d\vphi(\nabla_{e_i}X)}\\
& = 2\ip{\phi\tau(\vphi),\d\vphi(X)}-2\ip{\d\vphi(e_i),\nabla_{e_i}^\vphi(\phi\d\vphi(X))-\phi\d\vphi(\nabla_{e_i}X)}
\end{split}
\end{equation}
since $\phi$ is skew.  To proceed further, recall the following identity that holds true on any Sasakian manifold \cite[Theorem 6.3]{blair}:
\beq\label{phinabla}
(\nabla_X \phi) Y=h(X,Y)\xi - \eta(Y)X.
\eeq
This immediately gives us
\beq\label{cl}
\nabla_{e_i}^\vphi(\phi\d\vphi(X))-\phi\d\vphi(\nabla_{e_i}X)
=-\vphi^*\eta(X)\d\vphi(e_i)+h(\d\vphi(e_i),\d\vphi(X))\xi \circ \vphi +\phi \nabla d\varphi(e_i, X).
\eeq
Substituting \eqref{cl} into \eqref{cala}, one sees that
\bea
(\delta \varphi^* \dif \eta)(X) &=&2\ip{\phi\tau(\vphi),\d\vphi(X)}
+2\ip{\d\vphi(e_i),\vphi^*\eta(X)\d\vphi(e_i)-h(\d\vphi(e_i),\d\vphi(X))\xi\circ\vphi}\nonumber \\
& \ & - 2\langle d\varphi(e_i), \phi \nabla d\varphi(e_i, X) \rangle \nonumber \\
&=&2\ip{\phi\tau(\vphi)+|\d\vphi|^2\xi\circ\vphi-\d\vphi(\wh\xi), \ \d\vphi(X)}- 2\langle d\varphi(e_i), \phi \nabla d\varphi(e_i, X) \rangle. \nonumber 
\eea

\noindent Using $\d\vphi(\wh\xi)=\abs{\wh\xi}^2 \xi \circ \varphi$, cf. Prop. \ref{prop2}, we see that $\delta \varphi^* \dif \eta={c^2}\varphi^* \eta$ is equivalent to \eqref{phi_tens}.
\end{proof}

Let $\{e_1, e_2, e_3\}$ be a local orthonormal basis on $M$ given by the eigenvector fields of $\d\vphi^T\circ\d\vphi$. According to Proposition \ref{prop2} we may assume that $e_1= \frac{1}{\abs{\hat \xi}}\widehat{\xi}$ and its corresponding eigenvalue is $\lambda_1^2 = \abs{\varphi^* \eta}^2= \abs{\widehat \xi}^2$. Since $d\varphi(\widehat \xi) = \abs{\widehat \xi}^2 \xi \circ \varphi$  and $h(d\varphi(e_i), d\varphi(e_j))=0$ if $i\neq j$ we deduce that 
$d\varphi(e_i) \perp \xi \circ \varphi$, i.e. $d\varphi(e_i) \in \ker \eta$, $i=2,3$, so that: $\phi d\varphi(e_2)= (\lambda_2/\lambda_3) d\varphi(e_3)$ and $\phi d\varphi(e_3)= -(\lambda_3/\lambda_2) d\varphi(e_2)$ at any point where $\lambda_2^2+\lambda_3^2$ does not vanish. At this point we can employ \cite[Lemma 1]{LS} and the above Proposition in order to compute the components $A$ and $B$ of the tension field $\tau(\varphi)= A d\varphi(e_2) + B d\varphi(e_3)$ of the strong Beltrami map $\varphi$.

\section{Beltrami maps between spheres}\label{sec3}

Let us start with an immediate consequence of Proposition \ref{BW}.
\begin{cor}
Let $\pi: \Ss^3 \to \Ss^2(1/2)$ denote the Hopf map and $\eta$ the standard contact form on $\Ss^3$. If $\varphi: (\Ss^3, can) \to (\Ss^3, \eta)$ is a strong $c$ - Beltrami map, then $\pi \circ \varphi$ is a critical point of the strongly coupled Faddeev-Skyrme model with Hopf index $Q(\pi\circ \varphi)=\deg \varphi$. Moreover, $\pi \circ \varphi$ is an absolute minimizer in its homotopy class of the strongly coupled Faddeev-Skyrme energy if and only if $c=2$. 
\end{cor}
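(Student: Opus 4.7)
The plan is to handle the three assertions (critical point, Hopf index, and the ``iff'') in turn, each reducing to a short computation once the appropriate general result is invoked. First, $\pi:\Ss^3\to\Ss^2(1/2)$ is a Boothby-Wang fibration with $\pi^*\Omega=\tfrac{1}{2}d\eta$, so Proposition \ref{BW} applies directly and $\pi\circ\vphi$ is $\sigma_2$-critical, i.e., a critical point of $F$. For the Hopf index, I would take as primitive the $1$-form $A:=\tfrac{1}{2}\vphi^*\eta$, which is coclosed by \eqref{coclosed} and satisfies $dA=(\pi\circ\vphi)^*\Omega$. Since $*d\eta=2\eta$ implies $\eta\wedge d\eta=2\vol_{\Ss^3}$, a direct calculation gives
\[
\int_{\Ss^3} A\wedge dA \;=\; \tfrac{1}{4}\int_{\Ss^3}\vphi^*(\eta\wedge d\eta) \;=\; \tfrac{1}{2}\deg(\vphi)\vol(\Ss^3) \;=\; \pi^2\deg(\vphi),
\]
and dividing by $\vol(\Ss^2(1/2))^2=\pi^2$ yields $Q(\pi\circ\vphi)=\deg(\vphi)$.

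For the ``iff'' part, I would first establish the sharp topological lower bound $F(\psi)\geq \pi^2|Q(\psi)|$ for every smooth $\psi:\Ss^3\to\Ss^2(1/2)$. Since $H^1(\Ss^3,\RR)=0$, any exact $2$-form admits a coclosed primitive; choosing such an $A$ with $dA=\psi^*\Omega$, Cauchy-Schwarz gives
\[
\pi^2|Q(\psi)| \;=\; \left|\langle A,\,*dA\rangle_{L^2}\right| \;\leq\; \|A\|_{L^2}\|dA\|_{L^2},
\]
while coclosedness of $A$ together with the first eigenvalue $\lambda_1=4$ of the Hodge Laplacian on coclosed $1$-forms of $\Ss^3$ (realised by the Maurer-Cartan basis $\sigma_i$, which satisfy $*d\sigma_i=\pm 2\sigma_i$) yields $\|A\|_{L^2}\leq \tfrac{1}{2}\|dA\|_{L^2}$. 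Combining, $\pi^2|Q(\psi)|\leq \tfrac{1}{2}\|dA\|_{L^2}^2=F(\psi)$, with equality iff $*dA=\pm 2A$.

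To conclude, I would compute $F(\pi\circ\vphi)$ using the Beltrami relation $|d\vphi^*\eta|^2=c^2|\vphi^*\eta|^2$ and the energy saturation $E(\vphi)=c\|\vphi^*\eta\|_{L^2}^2=4\pi^2\deg\vphi$, which give $F(\pi\circ\vphi)=\tfrac{c\pi^2}{2}\deg\vphi$. For the primitive $A=\tfrac{1}{2}\vphi^*\eta$ one has $*dA=cA$, so the equality case of the lower bound occurs precisely when $c=2$; conversely, at $c=2$ the value $F(\pi\circ\vphi)=\pi^2\deg\vphi$ attains the topological minimum in the homotopy class of $\pi\circ\vphi$. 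The main obstacle I anticipate is the clean use of the spectral gap $\lambda_1=4$ on coclosed $1$-forms of $\Ss^3$; although standard, it is essential for obtaining the sharp inequality, as otherwise one recovers only a qualitative statement rather than the precise threshold $c=2$.
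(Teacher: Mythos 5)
The paper offers no proof of this corollary at all --- it is introduced with ``an immediate consequence of Proposition \ref{BW}'' and the reader is implicitly referred to \cite{sve,sve1} for the Faddeev--Skyrme side. Your write-up is therefore more detailed than the paper's, and it follows exactly the intended route: criticality from Proposition \ref{BW}, the Hopf invariant via the Whitehead integral with the coclosed primitive $A=\tfrac12\vphi^*\eta$ (your normalisations check out: $\eta\wedge\d\eta=2\vol$, $\Vol(\Ss^3)=2\pi^2$, $\Vol(\Ss^2(1/2))=\pi$), and the sharp bound $F(\psi)\geq\pi^2|Q(\psi)|$ from the spectral gap $\lambda_1=4$ on coexact $1$-forms, together with $F(\pi\circ\vphi)=\tfrac{c\pi^2}{2}\deg\vphi$ computed from saturation of the Bogomol'nyi bound. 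All of these steps are correct.

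The one place where the argument as written is incomplete is the ``only if'' direction of the final claim. You show that for $c\neq 2$ (hence $c\geq 3$, since $c$ must lie in the positive curl spectrum $\{2,3,4,\dots\}$ of $\Ss^3$) the map $\pi\circ\vphi$ fails to \emph{attain} the topological lower bound $\pi^2 Q$. That does not by itself preclude $\pi\circ\vphi$ from being an absolute minimizer: you also need to know that the infimum of $F$ over the homotopy class of Hopf invariant $Q$ actually equals $\pi^2|Q|$ (or at least is strictly below $\tfrac{c}{2}\pi^2 Q$), e.g.\ by exhibiting a competitor of smaller energy or by citing the computation of this infimum in \cite{sve,sve1}. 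For $c=2$ there is no issue: attaining the universal bound makes $\pi\circ\vphi$ an absolute minimizer outright. Adding one sentence invoking the value of the infimum per homotopy class (or restricting the ``only if'' to the statement that the topological bound is attained iff $c=2$) closes the gap.
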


Now let us recall that the spectrum of curl and the strong Beltrami fields on the unit 3-sphere $\Ss^3$ are explicitly known \cite[Theorem 5.2]{bar}, cf. also \cite{P-SS}. We have 

\begin{pr}
On  $\Ss^3$ consider the (global) standard left-invariant frame 
\begin{equation} \label{stdframe}
\begin{split}
\xi     & = - y_1 \partial_{x_1} + x_1 \partial_{y_1} - y_2 \partial_{x_2} + x_2 \partial_{y_2}\,,\\
X_1 & =-x_2 \partial_{x_1} + y_2 \partial_{y_1} + x_1 \partial_{x_2} - y_1 \partial_{y_2}\,,\\
X_2 & =-y_2 \partial_{x_1} - x_2 \partial_{y_1} + y_1 \partial_{x_2} + x_1 \partial_{y_2}\,.
\end{split}
\end{equation}
A vector field $X=f\xi + f_1 X_1+ f_2 X_2$ is strong Beltrami (i.e. curl $\mu$-eigenvector)
iff its components satisfy the system of equations
 \begin{equation} \label{curl123}
\begin{split}
(\mu-2) f    = & \, X_1(f_2) - X_2(f_1),\\
(\mu-2) f_1  = & \, X_2(f)-\xi(f_2) , \\
(\mu-2) f_2  = & \, \xi(f_1) - X_1(f),
\end{split}
\end{equation}
which implies also that $\xi(f) + X_1(f_1) + X_2(f_2)=0$.

\noindent The eigenvalues $\mu$ of the curl operator on $\Ss^3$ are $\pm(k + 2)$ with multiplicity $(k + 1)(k + 3)$, $k\in \NN$. The components  $f, f_1$, and $f_2$ of eigenvector fields for the eigenvalue $k + 2$ are, in particular,  harmonic homogenous polynomials (in Cartesian coordinates) of degree $k$ that restrict to eigenfunctions for the scalar Laplacian on $\Ss^3$ for the eigenvalue $k(k + 2)$. 
\end{pr}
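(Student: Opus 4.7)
The plan is to extract the system \eqref{curl123} from the structure equations of the left-invariant frame on $\Ss^3\cong SU(2)$, then manipulate the system algebraically to obtain a scalar eigenvalue equation for each component, and finally invoke the spectral result of B\"ar \cite{bar} for the multiplicities. A direct coordinate computation in \eqref{stdframe} yields the Lie brackets
\begin{equation*}
[\xi,X_1]=-2X_2,\qquad [X_1,X_2]=-2\xi,\qquad [X_2,\xi]=-2X_1,
\end{equation*}
so $\{\xi,X_1,X_2\}$ is an orthonormal left-invariant frame (easily checked since all three have norm $\sqrt{x_1^2+y_1^2+x_2^2+y_2^2}=1$ on the unit sphere). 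Applying $\d\alpha(Y,Z)=Y(\alpha(Z))-Z(\alpha(Y))-\alpha([Y,Z])$ to the dual coframe $\{\eta,\eta_1,\eta_2\}$ gives the structure equations $\d\eta=2\eta_1\wedge\eta_2$, $\d\eta_1=2\eta_2\wedge\eta$, $\d\eta_2=2\eta\wedge\eta_1$, with the orientation fixed so that $\vol_g=\eta\wedge\eta_1\wedge\eta_2$ (consistent with $\ast\d\eta=2\eta$ from Section \ref{sec2}). For $X^\flat=f\eta+f_1\eta_1+f_2\eta_2$, a short Leibniz-plus-wedge computation then gives
\begin{equation*}
\ast\d X^\flat=(X_1f_2-X_2f_1+2f)\eta+(X_2f-\xi f_2+2f_1)\eta_1+(\xi f_1-X_1f+2f_2)\eta_2,
\end{equation*}
so the curl eigenvalue condition $\ast\d X^\flat=\mu X^\flat$ is equivalent to \eqref{curl123}. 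The divergence-free condition follows by applying $\delta=-\ast\d\ast$ to the eigenvalue equation: since $\delta(\ast\d X^\flat)=\ast\d\d X^\flat=0$, we get $\mu\,\delta X^\flat=0$, which is nontrivial for the eigenvalues $\mu\neq 0$ of interest.

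Next, I derive the scalar equation for the component $f$ (symmetrically for $f_1,f_2$). Acting by the scalar $(\mu-2)$ on the first equation of \eqref{curl123} and substituting the other two gives
\begin{equation*}
(\mu-2)^2 f=X_1(\xi f_1-X_1f)-X_2(X_2f-\xi f_2).
\end{equation*}
The commutators rewrite $X_1\xi=\xi X_1+2X_2$ and $X_2\xi=\xi X_2-2X_1$; substituting these, then using the divergence-free identity $X_1f_1+X_2f_2=-\xi f$ and the first equation of \eqref{curl123} in the form $X_2f_1-X_1f_2=-(\mu-2)f$, the right-hand side collapses to $-(\xi^2+X_1^2+X_2^2)f-2(\mu-2)f$, so that
\begin{equation*}
(\xi^2+X_1^2+X_2^2)f=-\mu(\mu-2)f.
\end{equation*}
For the bi-invariant round metric on $SU(2)$ one has $\nabla_{X_i}X_j=\tfrac{1}{2}[X_i,X_j]$; orthonormality of $\{\xi,X_1,X_2\}$ and skew-symmetry of the brackets imply $\nabla_{X_i}X_i=0$ and $g(\nabla_{X_j}X_i,X_j)=0$, whence $\Delta_0=-(\xi^2+X_1^2+X_2^2)$ on functions. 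Substituting $\mu=k+2$ then yields $\Delta_0 f=k(k+2)f$. The classical fact that $k(k+2)$-eigenfunctions of $\Delta_0$ on $\Ss^3$ are precisely restrictions of harmonic homogeneous polynomials of degree $k$ on $\R^4$ finishes the component-wise assertion; the same argument applies verbatim to $f_1$ and $f_2$.

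For the complete list of curl eigenvalues and their multiplicities, I would cite B\"ar \cite{bar} directly. An intrinsic derivation goes via the Peter-Weyl decomposition $L^2(\Ss^3)=\bigoplus_{k\geq 0}V_k\otimes V_k^\ast$ (where $V_k$ denotes the irreducible $SU(2)$-representation of dimension $k+1$) and the Clebsch-Gordan splitting $V_k^\ast\otimes V_2=V_{k+2}^\ast\oplus V_k^\ast\oplus V_{k-2}^\ast$ applied to the left-invariant trivialization $\Omega^1(\Ss^3)\cong L^2(\Ss^3)\otimes \su(2)^\ast$: the middle summand $V_k\otimes V_k^\ast$ is the image of $\d:C^\infty\to\Omega^1$ (the exact 1-forms), while the outer summands $V_k\otimes V_{k+2}^\ast$ and $V_{k+2}\otimes V_k^\ast$ carry the $\mu=+(k+2)$ and $\mu=-(k+2)$ eigenspaces of $\ast\d$ respectively, each of dimension $(k+1)(k+3)$. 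The main obstacle is precisely this last step: matching Clebsch-Gordan summands to the sign of $\mu$ requires a careful orientation-and-Schur argument, whereas the earlier reduction to the scalar Laplacian eigenvalue equation is purely algebraic bookkeeping.
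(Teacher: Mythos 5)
Your proposal is correct, and it actually does more than the paper, which offers no proof at all: the proposition is introduced with ``let us recall that the spectrum of curl \dots\ are explicitly known'' and is justified entirely by citation to B\"ar \cite[Theorem 5.2]{bar} and to \cite{P-SS}. I have checked your computation in detail and it holds up: the brackets $[\xi,X_1]=-2X_2$, $[X_1,X_2]=-2\xi$, $[X_2,\xi]=-2X_1$ are right (these are the left-multiplication fields by $i,j,k$ on unit quaternions), the structure equations $\d\eta=2\eta_1\wedge\eta_2$ etc.\ follow with the paper's (no-$\tfrac12$) convention for $\d$, the displayed formula for $\ast\d X^\flat$ in the coframe is correct with the orientation $\vol_g=\eta\wedge\eta_1\wedge\eta_2$, and the elimination using $X_1\xi=\xi X_1+2X_2$, $X_2\xi=\xi X_2-2X_1$ together with the divergence identity does collapse to $(\xi^2+X_1^2+X_2^2)f=-\mu(\mu-2)f$, giving the scalar eigenvalue $k(k+2)$ for $\mu=k+2$. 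Two small points worth flagging: your derivation of $\xi(f)+X_1(f_1)+X_2(f_2)=0$ genuinely needs $\mu\neq 0$ (a closed but non-coclosed $1$-form on $\Ss^3$ would violate it), which is harmless here since the spectrum is $\pm(k+2)$ but should be said if the identity is claimed for all solutions of \eqref{curl123}; and the multiplicity count $(k+1)(k+3)=\dim V_k\cdot\dim V_{k+2}$, including the matching of the sign of $\mu$ to the two off-diagonal Clebsch--Gordan summands, is the one part you (like the paper) still outsource to \cite{bar} --- your Peter--Weyl sketch is the right mechanism, but as you note the orientation/Schur bookkeeping is where the actual work lies. In short, your argument is a correct self-contained replacement for the first two assertions of the proposition and an honest citation for the third, whereas the paper cites for all three.
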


\begin{pr} \label{s3deg1}
Any non-isotropic $C^1$ solution $\varphi:\Ss^3 \to \Ss^3$ of the BPS equations \eqref{bog} for $c = 2$ is homotopic to the identity, i.e., $\deg(\varphi)=1$.
\end{pr}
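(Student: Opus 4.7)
The plan is to combine the spectral description of curl eigenfields on $\Ss^3$ just stated with Proposition~\ref{prop2}$(i)$. The crucial point is that $c=2$ is the smallest admissible positive curl eigenvalue on $\Ss^3$, so the associated Beltrami field $\wh\xi=(\vphi^*\eta)^\sharp$ is maximally constrained.

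First, by \eqref{bog} the vector field $\wh\xi$ satisfies $\curl\wh\xi=2\wh\xi$. Comparing with the spectrum $\pm(k+2)$ forces $k=0$, so the preceding proposition tells us that in the expansion $\wh\xi=f\xi+f_1 X_1+f_2 X_2$ the components $f,f_1,f_2$ are harmonic homogeneous polynomials of degree $0$, i.e.\ real constants. Non-isotropy of $\vphi$ ensures not all three vanish, so $\wh\xi$ is a nontrivial left-invariant vector field on $\Ss^3$.

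Next I would observe that the frame $\{\xi,X_1,X_2\}$ defined in \eqref{stdframe} is globally orthonormal for the round metric on $\Ss^3$ (a direct check from the explicit formulas: each field is tangent to $\Ss^3$, has Euclidean norm $\sqrt{x_1^2+y_1^2+x_2^2+y_2^2}=1$ on the unit sphere, and pairwise orthogonality follows from a short computation). Consequently $|\wh\xi|^2=f^2+f_1^2+f_2^2$ is a strictly positive constant, so $\wh\xi$ vanishes nowhere. By Proposition~\ref{prop2}$(i)$, $\vphi$ has then no critical points, so it is a $C^1$ submersion between equidimensional compact connected manifolds.

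The final step is a standard topological argument: such a submersion is automatically a covering map, and simple connectivity of the target $\Ss^3$ forces the covering to be one-sheeted, so $\vphi$ is a $C^1$ diffeomorphism. In particular $|\deg(\vphi)|=1$, and the orientation convention $\deg(\vphi)\geq 0$ adopted in Section~\ref{sec2} yields $\deg(\vphi)=1$, which is the homotopy class of the identity on $\Ss^3$. There is no substantial obstacle in this approach; the only point requiring a moment's thought is the passage from local to global diffeomorphism, for which simple connectedness of $\Ss^3$ is precisely what is needed.
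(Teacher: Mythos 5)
Your proposal is correct and follows essentially the same route as the paper: identify $c=2$ as the lowest positive curl eigenvalue on $\Ss^3$, deduce that $\vphi^*\eta$ (equivalently $\wh\xi$) has constant components in the left-invariant frame and hence constant nonzero norm, invoke Proposition \ref{prop2}$(i)$ to rule out critical points, and conclude via simple connectedness that $\vphi$ is a diffeomorphism of degree $1$. The only cosmetic difference is that you phrase the spectral step in terms of the vector field $\wh\xi$ and degree-zero harmonic polynomials, while the paper expands the one-form $\wh\eta$ in the orthogonal basis of the $\mu=2$ eigenspace; these are equivalent.
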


\begin{proof}
The BPS equation \eqref{bog} for $c = 2$ is simply the eigenform equation $\ast \dif \widehat{\eta} = 2 \widehat{\eta}$ for $\widehat \eta := \varphi^*\eta$. We saw that on $\Ss^3$, $\mu=2$ is the lowest positive  eigenvalue of the curl operator $\ast \dif$ and it has multiplicity 3. The corresponding eigenspace $\mathcal{E}(\ast \dif, \mu=2)$ has the ($L^2$- and $g$-)orthogonal basis given by $\{\eta^M, \omega_1^M, \omega_2^M\}$. If equation \eqref{bog} has a solution $\varphi$, then $\widehat \eta=\varphi^*\eta \in \mathcal{E}(\ast \dif, \mu=2)$, so it can be expanded as  $\widehat \eta=c_0\eta^M + c_1 \omega_1^M + c_2 \omega_2^M$ with (constant) `Fourier coefficients' $c_0, c_1$ and $c_2$. 
In particular, $\abs{\widehat\eta}^2=c_0^2+c_1^2+ c_2^2$ is constant, which implies that 
$\widehat\eta$ has no zeros, unless it is identically zero and $\varphi$ is isotropic (a case excluded by hypothesis). By Proposition \ref{prop2}, $\varphi$ has no critical points, so it is a local diffeomorphism; but $\Ss^3$ is simply connected, so $\varphi$ is a diffeomorphism and $\deg(\varphi)=1$. 
\end{proof}

In particular this shows that the mapping between spheres found in \cite{FZ} is not $C^1$ smooth, so cannot represent a classical solution of the model. This can be also checked directly by translating this mapping into Cartesian coordinates.

Since for any strong $c$ - Beltrami map we have 
$$
\int_M \abs{\varphi^* \eta}^2 \mathrm{vol}_g = \tfrac{2}{c}\mathrm{Vol}(N) \deg(\varphi),
$$
for a strong 2-Beltrami map $\varphi$ as in Proposition \ref{s3deg1} we must have $\lambda_1^2 = \abs{\varphi^* \eta}^2 = 1$, and Proposition \ref{prop2}$(v)$ implies that 
$\varphi$ is actually a volume preserving diffeomorphism.

Notice finally that the crucial property needed in Proposition \ref{s3deg1} is the constant norm of the associated Beltrami field $\widehat \xi$. This property is enjoyed only by the eigenvector fields of the lowest eigenvalues  $c = \pm 2$, cf. \cite[Proposition 1.5]{pss}.

\medskip

Some natural (open) questions arise here: $(i)$ If on the codomain $N=S^3$ we consider the (non-standard) contact forms explicitly given in \cite{pss}, is the situation less rigid so that we can find Beltrami maps? $(ii)$ Do other Sasakian compact codomains allow Beltrami maps? $(iii)$ Can we find (high degree) Beltrami maps from the 3-torus to $\Ss^3$, akin to Skyrmion Crystals \cite{harleaspe}? One should note that such maps, if they exist, satisfy a topological lower energy bound, so certainly minimize energy with respect to variations of the period lattice defining the torus, as well as variation of the field -- the crucial property required for them to be interpreted as soliton crystals \cite{spe}. Finally the most interesting for us is the following

\begin{con} If $M=\Ss^3$ and $N=S^3$ with the standard contact structure, there are no smooth absolute minima of the energy \eqref{energcontact} of degree $>1$, irrespective of the coupling constant.
\end{con}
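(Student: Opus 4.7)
The plan is to extend the method of Proposition \ref{s3deg1} to all admissible couplings by combining the spectral structure of $*d$ on $\Ss^3$ with a covering space argument. By \eqref{cliatsc} together with the Bogomol'nyi equation \eqref{bog}, $\alpha:=\varphi^*\eta$ is a non-trivial eigenform of $*d$ with eigenvalue $c$. The curl spectrum on $\Ss^3$ recalled above then forces $c\in\{2,3,4,\ldots\}$. Proposition \ref{s3deg1} handles $c=2$, so it suffices to rule out $c=k+2\geq 3$ with $\deg(\varphi)\geq 2$, which I would attempt by contradiction.

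Suppose such a $\varphi$ exists. The components of $\alpha$ in the standard left-invariant coframe $\{\eta,\omega_1,\omega_2\}$ are non-constant degree-$k$ harmonic polynomials on $\RR^4$ (restricted to $\Ss^3$), and hence have a non-empty common zero set. By Proposition \ref{prop2}$(i)$ the critical set $\Sigma:=\{\alpha=0\}\subset\Ss^3$ coincides with the critical points of $\varphi$, and by the cited theorem of Gerner $\Sigma$ is an analytic subset of codimension at least $2$. Setting $V:=\varphi(\Sigma)$, the Corollary following Proposition \ref{prop2} ensures that $\varphi$ is a local diffeomorphism away from $\Sigma$, and $\varphi$ is proper ($\Ss^3$ being compact). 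Hence
\begin{equation*}
\varphi\big|_{\Ss^3\setminus\varphi^{-1}(V)}\colon \Ss^3\setminus\varphi^{-1}(V)\ra \Ss^3\setminus V
\end{equation*}
is a proper local diffeomorphism, hence a covering map of degree $\deg(\varphi)$. If $V$ and $\varphi^{-1}(V)$ have codimension at least $2$, with $\Ss^3\setminus\varphi^{-1}(V)$ connected and $\pi_1(\Ss^3\setminus V)$ trivial, then this covering is trivial, forcing $\deg(\varphi)=1$, the desired contradiction.

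The \textbf{main obstacle} is establishing these topological conditions on $V$ and $\varphi^{-1}(V)$. Gerner's bound permits $\Sigma$ to contain $1$-dimensional analytic curves, whose images under $\varphi$ could be knotted loops in $\Ss^3$, rendering $\pi_1(\Ss^3\setminus V)$ non-trivial and admitting non-trivial covers. To overcome this one must exploit additional rigidity: $\alpha$ is not merely a curl-$c$ eigenform but the pullback of a contact form by a smooth map, so by \eqref{useful} $\alpha\wedge d\alpha=c|\alpha|^2\vol_g$, and the Maurer--Cartan compatibility yields $d\alpha=-2\varphi^*\sigma_1\wedge\varphi^*\sigma_2=c*\alpha$. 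This is a highly overdetermined system at the zeros of $\alpha$, and one expects it to force $\Sigma$ (and hence $V$) to be a finite set of points, at which point the covering argument closes. An alternative route would be to exploit the closed-orbit property of $\widehat\xi=\alpha^\sharp$ noted after Proposition \ref{BW}: if one could prove that every curl-$c$ eigenfield on $\Ss^3$ with all orbits closed is (a scalar multiple of) a Hopf field, then $c$ would be forced to equal $2$ and we would be back in the already-solved case. Either strategy will require a delicate local analysis at the zeros of the Beltrami field that goes beyond the general methods developed in the paper so far.
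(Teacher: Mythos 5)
The statement you are addressing is a \emph{conjecture}: the paper offers no proof of it, only a motivating heuristic, and that heuristic is essentially identical to your covering-space argument. The paper observes that a strong Beltrami map is a local diffeomorphism away from the zero set $\Sigma$ of $\vphi^*\eta$ (Proposition \ref{prop2}$(i)$), and that if $\Sigma$ were finite, the induced $k$-fold cover of the simply connected space $S^3\setminus\vphi(\Sigma)$ would force $k=1$. Your spectral preamble (that \eqref{cliatsc} and \eqref{bog} force $c=k+2$, and that $c=2$ is settled by Proposition \ref{s3deg1}) is likewise taken directly from Section \ref{sec3}. The ``main obstacle'' you then identify --- Gerner's theorem only bounds the codimension of $\Sigma$ below by $2$, so $\Sigma$ may contain analytic curves whose images could leave $\Ss^3\setminus V$ non-simply-connected --- is precisely the gap that makes the statement a conjecture rather than a theorem. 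Neither of your proposed repairs is carried out: the ``overdetermined system at the zeros'' of $\alpha$ is not actually analysed, and the closed-orbit route rests on an unproved classification of curl eigenfields on $\Ss^3$ all of whose orbits are closed. You flag this honestly, but as written the argument does not establish the statement; it reproduces the paper's motivation together with an accurate diagnosis of why the motivation falls short of a proof.

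Two smaller points. Your assertion that the three degree-$k$ harmonic polynomial components of $\alpha$ ``hence have a non-empty common zero set'' is unjustified and is false for general curl eigenfields: the reference \cite{pss} cited in the paper exhibits nowhere-vanishing eigenfields for higher eigenvalues on $\Ss^3$ (this is what underlies open question $(i)$ of Section \ref{sec3}). The slip is harmless here, since if $\Sigma=\emptyset$ then $\vphi$ is a covering of the simply connected $\Ss^3$ and has degree $1$ anyway, but you should treat emptiness of $\Sigma$ as the easy case rather than assert it cannot occur. Also, when you claim the restricted covering has degree $\deg(\vphi)$, you are implicitly using that the Jacobian of a strong Beltrami map is non-negative (the Corollary following Proposition \ref{prop2}), so that preimages cannot cancel in the degree count; this should be said explicitly.
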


This conjecture is motivated as follows. The critical set of a strong Beltrami map is the zero set of its associated Beltrami field which, for compact domains, is generically finite. But no smooth map $\varphi:S^3\ra S^3$ of degree $k>1$ can have finite critical set for topological reasons: denoting the critical set $\Sigma$, such a map would define a $k$-fold cover $S^3\less\phi^{-1}(\phi(\Sigma))\ra S^3\less\phi(\Sigma)$, which is impossible since $S^3\less\phi(\Sigma)$ is simply connected.

\section{Nonconstant coupling}\label{sec4}

The Bogomol'nyi argument holds equally well if we allow the coupling $c$ to be a positive continuous {\em function} on $M$, rather than  a constant. This possibility was explored (for $N=\Ss^3$) in \cite{FS}.  Note that, in this more general setting, the Bogomol'nyi equation implies
\beq\label{ag}
\di\wh\xi=-\delta\vphi^*\eta=-\frac{1}{c}\wh{\xi}(c),
\eeq
which, in general, does not vanish. Hence the vector field $\wh\xi$ is no longer necessarily divergenceless. 

\begin{pr}
If $\vphi:(M^3,g)\ra (N^3,\eta)$ is a $c$-Beltrami map for some positive function $c$, then $\vphi:(M^3,c^2 g)\ra (N^3,\eta)$ is a $1$-Beltrami map. In particular, the associated Beltrami field on $(M^3,g)$ has a nowhere dense zero set.
\end{pr}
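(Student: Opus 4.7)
The plan is to use the conformal transformation law for the Hodge star. Recall that on an oriented $n$-manifold, if $\tilde g=\lambda^2 g$ for some positive function $\lambda$, then for any $p$-form $\alpha$,
\[
\ast_{\tilde g}\alpha=\lambda^{n-2p}\ast_g\alpha,
\]
since $\vol_{\tilde g}=\lambda^n\vol_g$ and the pointwise inner product of $p$-forms scales by $\lambda^{-2p}$. In our situation $n=3$, $\lambda=c$, and $\d\varphi^*\eta$ is a $2$-form, so the exponent is $n-2p=-1$. Thus
\[
\ast_{c^2 g}\d\varphi^*\eta=c^{-1}\ast_g\d\varphi^*\eta=c^{-1}(c\,\varphi^*\eta)=\varphi^*\eta,
\]
where the second equality uses that $\varphi$ satisfies \eqref{bog} for the metric $g$ with coupling $c$. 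This is precisely the Bogomol'nyi equation \eqref{bog} for the map $\varphi:(M,c^2g)\to(N,\eta)$ with constant coupling $1$. Non-isotropy is a pointwise condition on the $1$-form $\varphi^*\eta$ and so is preserved. Hence $\varphi$ is a $1$-Beltrami (that is, strong Beltrami) map with respect to the rescaled metric.

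For the second assertion, I apply Proposition \ref{prop2}$(i)$ to the strong Beltrami map $\varphi:(M,c^2g)\to(N,\eta)$: its critical set coincides with the zero set of $\varphi^*\eta$ and is nowhere dense. The critical set of $\varphi$ is a property of the differential $\d\varphi$ alone, independent of the metric, and likewise the zero set of the $1$-form $\varphi^*\eta$ does not depend on any metric. The associated Beltrami field on $(M,g)$ is $\widehat\xi=(\varphi^*\eta)^{\sharp_g}$, whose zeros agree with those of $\varphi^*\eta$. Therefore the zero set of $\widehat\xi$ on $(M,g)$ is nowhere dense, as claimed.

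There is no real obstacle: the statement is essentially a bookkeeping observation about the scaling weight of $\ast$ on middle-degree-adjacent forms in dimension three, together with the metric-independence of the locus $\{\varphi^*\eta=0\}$.
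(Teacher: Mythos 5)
Your proof is correct and follows the same route as the paper: both use the conformal rescaling law for the Hodge star on $2$-forms in dimension three, $\ast_{c^2g}=c^{-1}\ast_g$, to turn the $c$-Beltrami equation into the $1$-Beltrami equation for the metric $c^2g$. Your explicit verification of the second assertion (via the metric-independence of the zero set of $\varphi^*\eta$ and Proposition \ref{prop2}$(i)$) is exactly the step the paper leaves implicit.
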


\begin{proof}
Under the conformal change of metric $g\mapsto \tilde g = c^2 g$, the Hodge isomorphism on 2-forms transforms as $\ast\mapsto\tilde\ast=\frac{1}{c}\ast$, and the result follows immediately from \eqref{bog}.
\end{proof}

An immediate consequence of the above is the fact that Proposition \ref{prop2} remains valid in the context of Beltrami maps with non-constant $c>0$. 

We can construct maps $\Ss^3\ra\Ss^3$ that satisfy \eqref{bog} for an appropriately chosen coupling $c:\Ss^3\ra(0,\infty)$. A simple example is the (degree 1) suspension map defined in \cite{ES} by:
$$
\vphi_a:(\cos s,  \nvec\sin s) \mapsto (\cos \alpha(s), \nvec\sin \alpha(s)); \quad \alpha(s)=2\arctan\left[a \tan(s/2)\right], 
$$
where $a>0$ is a constant parameter and $(s,\nvec)\in [0,\pi]\times \Ss^2$. One can check that $\widehat \xi$ is collinear with its own curl (i.e.\  \eqref{bog} holds) with $c=4 a/(1 + a^2 + (1 - a^2) \cos s)$. Notice however that $\widehat \xi$ is compressible ($\di \widehat \xi \neq 0$) unless $a=1$, in which case we recover the identity map. This is also the only value of $a$ for which $\vphi_a$ is harmonic. 

Clearly, if we fix a nonconstant function $c:M\ra(0,\infty)$, the model breaks the isometry group of $M$ and, in particular, the model on $M=\R^3$ has no Lorentz invariant extension to Minkowski space. The variable coupling
must be attributed to interaction of the Skyrme field with some (otherwise unspecified) background fields
\cite{FS}.
 A more appealing possibility is to choose $c=\wt{c}\circ\vphi$, that is, fix a positive function $\wt{c}:N\ra\R$ on the {\em target}, and consider the 
energy
\beq
E(\vphi)=\frac12\int_M(\wt{c}(\vphi)|\vphi^*\eta|^2+\wt{c}(\vphi)^{-1}|\vphi^*\d\eta|^2)\vol_g,
\eeq
which maintains the Bogomol'nyi argument and does permit Lorentz invariant extension. A natural constraint on
$\wt{c}$, particularly in the case of Sasakian target, where $\xi$ is a Killing vector, is that $\xi[\wt{c}]=0$, so that $\wt{c}$ is constant along the integral curves of $\xi$. In this case, \eqref{ag} and
Proposition \ref{prop2}$(iii)$ imply that, for any solution of \eqref{bog}, $\delta\vphi^*\eta=0$. Hence
$\wh{\xi}$ is a Beltrami field (and $\varphi$ a Beltrami map). From this, we immediately get a non-existence result on $M=\R^3$.

\begin{pr} Let $N^3$ be a compact contact manifold and
$\wt{c}:N\ra(0,\infty)$ be a $C^1$ function with $\xi[\wt{c}]=0$. Then there are no
$C^1$ solutions $\vphi:\R^3\ra N$ of the Bogomol'nyi equation \eqref{bog} with $c=\wt{c}\circ \vphi$
that have finite nonzero energy.
\end{pr}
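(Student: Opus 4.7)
First, I would use the hypothesis $\xi[\tilde c]=0$ together with Proposition \ref{prop2}$(iii)$ (which, by the preceding remark, is valid also for variable coupling) to deduce that $\hat\xi$ is divergence-free:
\[
\hat\xi(c) \;=\; d\tilde c\bigl(d\varphi(\hat\xi)\bigr) \;=\; |\hat\xi|^2\,(\xi[\tilde c])\!\circ\!\varphi \;=\; 0,
\]
so \eqref{ag} yields $\di\hat\xi=0$. Hence $\hat\xi$ is a genuine Beltrami field on $\R^3$ satisfying $\curl\hat\xi = c\hat\xi$ and $\di\hat\xi=0$, where $c=\tilde c\!\circ\!\varphi$ is bounded, $0<c_-\le c\le c_+$, since $\tilde c$ is continuous on compact $N$. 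The finite-energy hypothesis gives $\hat\xi\in L^2(\R^3)$: for a BPS-saturating map $E(\varphi)=\int_{\R^3} c\,|\hat\xi|^2\,\vol_g<\infty$ and $c\geq c_->0$.

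Next, taking curl a second time and using $\curl\circ\curl = \gr\,\di - \Delta$ on divergence-free fields in Euclidean 3-space, I would derive the Helmholtz-type PDE
\[
-\Delta\hat\xi \;=\; c^2\,\hat\xi \;+\; \nabla c\times\hat\xi \qquad\text{on }\R^3,
\]
with bounded coefficients, positive potential $c^2\ge c_-^2>0$, and the useful structural identity $\hat\xi\cdot\nabla c\equiv 0$ (equivalent to the first of our observations). Since the right-hand side is in $L^2$, elliptic regularity gives $\hat\xi\in H^2(\R^3)\hookrightarrow C^0_0(\R^3)$, so $\hat\xi$ is continuous and decays to zero at infinity.

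The central analytic step is to show that any such $L^2$ Beltrami field must vanish. The constant-$c$ prototype is classical: the Helmholtz equation $-\Delta u=c^2u$ has no nontrivial $L^2$ solution on $\R^3$ (by Fourier analysis, or equivalently a Pohozaev identity). For the variable-$c$ case I would apply the Pohozaev/virial argument: multiply the PDE by $(x\cdot\nabla)\hat\xi$, integrate over a ball $B_R$, integrate by parts, and let $R\to\infty$ along a subsequence on which the boundary terms vanish (such a subsequence exists by the $L^2$ bound and pigeonhole). The new correction terms arising from $\nabla c$ are to be handled via the orthogonality $\hat\xi\cdot(\nabla c\times\hat\xi)\equiv 0$ together with the pointwise bound $|\nabla c|\le\|d\tilde c\|_\infty\,|d\varphi|$, in which $|d\varphi|$ is controlled by the strain-tensor relations of Proposition \ref{prop2}. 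An alternative route would be a Rellich-type unique-continuation theorem at infinity for the Schr\"odinger-type operator $-\Delta-c^2(x)$ with strictly positive bounded potential, which likewise forces the $L^2$ solution to be trivial.

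The chief obstacle is the careful bookkeeping of the variable-$c$ correction terms in the Pohozaev identity and the rigorous justification of the $R\to\infty$ limit without pointwise decay rates, the constant-$c$ case being essentially immediate. Once $\hat\xi\equiv 0$ is established, the conclusion is swift: $\varphi^*\eta = 0$ identically makes $\varphi$ isotropic, and the BPS equation \eqref{bog} then forces $\varphi^*d\eta=0$ as well, so $E(\varphi)=0$, contradicting the finite \emph{nonzero} energy hypothesis.
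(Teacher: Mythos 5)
Your reduction is exactly the paper's: you use $\xi[\wt{c}]=0$ together with Proposition \ref{prop2}$(iii)$ to get $\wh\xi(c)=|\wh\xi|^2(\xi[\wt c])\circ\vphi=0$, hence $\di\wh\xi=0$ from \eqref{ag}; you use compactness of $N$ to bound $c$ below by a positive constant and conclude $\wh\xi\in L^2(\R^3)$. Up to that point the two arguments coincide. The divergence is in the decisive final step: the paper simply invokes the Liouville theorem for Beltrami flows of Nadirashvili \cite{nad} to conclude that an $L^2$ Beltrami field on $\R^3$ must vanish, whereas you propose to prove such a Liouville theorem from scratch via a Pohozaev/virial identity, or a Rellich-type absence-of-embedded-eigenvalues argument for $-\Delta-c^2(x)$.

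That final step, as written, has genuine gaps. First, the bound $|\nabla c|\le\|d\wt c\|_\infty|d\vphi|$ is fine, but your claim that $|d\vphi|$ is ``controlled by the strain-tensor relations of Proposition \ref{prop2}'' is false: the relations $\lambda_1^2=|\vphi^*\eta|^2$ and $4\lambda_2^2\lambda_3^2=c^2\lambda_1^2$ constrain only the product $\lambda_2^2\lambda_3^2$, leaving $\lambda_2^2+\lambda_3^2$, and hence $|d\vphi|^2$, unbounded. So $\nabla c\times\wh\xi$ is not known to lie in $L^2$, and both your elliptic-regularity/decay step and the error terms in the Pohozaev identity are uncontrolled. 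Second, and more fundamentally, the assertion that a Rellich-type theorem forces $L^2$ solutions of $-\Delta u=Vu$ to vanish for a merely bounded potential with $0<V_-\le V\le V_+$ is not a standard fact: classical absence-of-embedded-eigenvalue results require the potential to approach a constant with quantitative decay, which $c^2(x)=\wt c(\vphi(x))^2$ need not do. This difficulty is precisely why the $L^2$ Liouville theorem for Beltrami fields is a substantive result (it is the content of \cite{nad}); the ``bookkeeping'' you defer is the whole theorem, and only the constant-$c$ Fourier-support argument you mention is genuinely immediate. The repair is simple: once you have a divergence-free $L^2$ Beltrami field whose proportionality factor is bounded away from zero, cite \cite{nad} rather than reproving it.
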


\begin{proof} 
Assume, towards a contradiction, that such a solution $\vphi$ with $0<E(\vphi)<\infty$ exists.
Since $N$ is compact and $\wt{c}:N\ra(0,\infty)$ is continuous, $\wt{c}\geq C>0$, some constant, and so, likewise, $c=\wt{c}\circ \vphi\geq C$. Hence
$\|\wh{\xi}\|_{L^2}^2=\|\vphi^*\eta\|_{L^2}^2\leq 2E(\vphi)/C<\infty$. So $\wh{\xi}$ is in $L^2(\R^3)$ and, as already observed, is a Beltrami field. But according to \cite{nad}, the only such field is
$\wh{\xi}=0$, a contradiction (since this field would also have $\d\vphi^*\eta=0$, and hence $E(\vphi)=0$). 
\end{proof}
\noindent For restrictions on the existence of Beltrami fields in terms of the proportionality factor see \cite{rare}.

\bigskip

\paragraph{Acknowledgements.} We would like to thank D. Peralta-Salas for useful discussions related to Beltrami fields and their zero sets.

\medskip

\begin{small}

\end{small}

\end{document}